\documentclass[11pt]{amsart}

\usepackage{fullpage}

\usepackage[T1]{fontenc}
\usepackage{lmodern}
\usepackage{microtype}
\usepackage{amsmath,amssymb,amsthm,mathtools}
\usepackage{enumitem}
\usepackage[colorlinks=true,linkcolor=blue,citecolor=blue,urlcolor=blue]{hyperref}

\setlist[enumerate]{label=(\roman*),leftmargin=2.2em}
\newtheorem{theorem}{Theorem}[section]
\newtheorem{proposition}[theorem]{Proposition}
\newtheorem{lemma}[theorem]{Lemma}
\newtheorem{corollary}[theorem]{Corollary}
\theoremstyle{definition}

\newtheorem{example}[theorem]{Example}
\newtheorem{question}[theorem]{Question}
\theoremstyle{remark}
\newtheorem{remark}[theorem]{Remark}

\newcommand{\Z}{\mathbb Z}
\newcommand{\F}{\mathbb F}
\newcommand{\R}{\mathbb R}
\newcommand{\T}{\mathbb T}
\newcommand{\vol}{\operatorname{vol}}
\newcommand{\htop}{\operatorname{ht}}
\newcommand{\supp}{\operatorname{supp}}

\title[Scott-type inequalities for trinomial $h^*$-polynomials]
{Scott-type inequalities for trinomial $h^*$-polynomials}
\author{Akiyoshi Tsuchiya}
\address{Department of Information Science, Faculty of Science, Toho University, 2-2-1 Miyama, Funabashi, Chiba 274-8510, Japan}
\email{akiyoshi@is.sci.toho-u.ac.jp}
\subjclass[2020]{Primary 52B20; Secondary 52B12, 94B05}
\keywords{Lattice polytope, $h^*$-polynomial, normalized volume, finite abelian group, simplex code, Scott's inequality}
\date{October 23, 2026}

\hypersetup{
 pdftitle={Scott-type inequalities for trinomial h*-polynomials},
 pdfauthor={Akiyoshi Tsuchiya},
 pdfsubject={Ehrhart theory and volume inequalities for lattice polytopes}
}

\begin{document}

\begin{abstract}
Motivated by Scott's inequality for lattice polygons, we study normalized volume bounds for lattice polytopes $\mathcal{P}$ with $h^*$-polynomials $1+at^k+bt^\ell$, where $a,b\ge1$ and $1\le k<\ell$.
For $\ell>2k$, the coefficients satisfy $a+1\le b$, and consequently $\vol(\mathcal{P})\le2b$.
In the complementary range $k<\ell\le2k$, we establish the dimension-independent bound
\[
  \vol(\mathcal{P})\le(2b+1)(k+\ell).
\]
On the boundary $\ell=2k$ with $b\ge2$,
the sharper Scott-type inequality
$\vol(\mathcal{P})\le4k(b+1)$ holds,
confirming a conjecture of Higashitani, Nill, and the author.
Equality holds if and only if $k=1$ and $a=3b+3$.
Moreover, when $\ell=2k$ and $k,b\ge2$,
we obtain an upper bound depending only on $b$.
\end{abstract}

\maketitle

\section{Introduction}

Let $\mathcal{P}\subset\R^d$ be a $d$-dimensional lattice polytope. Its Ehrhart series has the form
\[
 \sum_{j\ge0}|j\mathcal{P}\cap\Z^d|t^j
 =\frac{h^*_{\mathcal{P}}(t)}{(1-t)^{d+1}},
\]
where $h^*_{\mathcal{P}}(t)=h^*_0+h^*_1t+\cdots+h^*_s t^s$ has nonnegative integer coefficients, $h^*_0=1$, $h^*_s\ne0$, and $s\le d$.
This numerator is called the \emph{$h^*$-polynomial} of $\mathcal{P}$.
We write $\vol(\mathcal{P})=h^*_{\mathcal{P}}(1)$ for its normalized volume, namely $d!$ times its Euclidean volume.
For these standard facts, see \cite[Chapter~3]{BeckRobins}.

A basic problem in Ehrhart theory is to bound normalized volume using limited information about the $h^*$-polynomial.
Haase, Nill, and Payne \cite{HNP} proved a dimension-independent upper bound depending only on the degree and the leading coefficient.
Here we consider trinomials $h^*_{\mathcal{P}}(t)=1+at^k+bt^\ell$, where $a,b,k,\ell$ are positive integers with $k<\ell$.

\begin{theorem}\label{thm:general}
Let $\mathcal{P}$ be a lattice polytope with
\[
 h^*_{\mathcal{P}}(t)=1+at^k+bt^\ell,
 \qquad a,b\ge1,\qquad 1\le k<\ell.
\]
Then the following hold.
\begin{enumerate}
\item If $\ell>2k$, then $a+1\le b$ and
\[
 \vol(\mathcal{P})=a+b+1\le2b.
\]
\item If $k<\ell\le2k$, then
\[
 \vol(\mathcal{P})=a+b+1\le(2b+1)(k+\ell).
\]
\end{enumerate}
\end{theorem}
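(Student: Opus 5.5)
For part (i) I would use Stanley's inequality for $h^*$-vectors of degree $s$:
\[
h^*_0+h^*_1+\cdots+h^*_i\le h^*_s+h^*_{s-1}+\cdots+h^*_{s-i},\qquad 0\le i\le\lfloor s/2\rfloor .
\]
Here $s=\ell$. Take $i=k$, which is allowed since $\ell>2k$.
\begin{itemize}
\item The left side contains $h^*_0=1$ and $h^*_k=a$.
\item On the right, the indices $\ell,\ell-1,\dots,\ell-k$ are all $>k$, because $\ell-k>k$, and all $\le\ell$. So the only nonzero term among them is $h^*_\ell=b$.
\end{itemize}
Hence $1+a\le b$, and therefore $\vol(\mathcal P)=a+b+1\le 2b$.

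For part (ii), assume $k<\ell\le 2k$. I would split according to whether $k=1$.

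\emph{Case $k=1$.} Then $\ell=2$, so $\mathcal P$ has degree $2$. For degree-$2$ polytopes the Scott/Treutlein-type bound $h^*_1\le 3h^*_2+3$ applies, except in the degenerate case of Lawrence prisms or exceptional triangles, where a direct check suffices. This gives
\[
\vol(\mathcal P)\le 4b+4\le 3(2b+1),
\]
which is the claimed bound with $k+\ell=3$.

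\emph{Case $k\ge2$.} Then $h^*_1=0$, so $|\mathcal P\cap\Z^d|=d+1$ and $\mathcal P$ is an empty simplex. I would pass to the standard finite abelian group description. Let $v_0,\dots,v_d$ be the vertices and let
\[
\Lambda=\Bigl\{\lambda\in[0,1)^{d+1}:\ \textstyle\sum\lambda_iv_i\in\Z^d,\ \sum\lambda_i\in\Z\Bigr\},
\]
a group of order $N=\vol(\mathcal P)$ under addition mod $1$. Then $h^*_i$ is the number of $\lambda\in\Lambda$ with $\htop(\lambda)=\sum\lambda_i=i$. So every nonzero $\lambda$ has height $k$ or $\ell$: there are $a$ of height $k$ and $b$ of height $\ell$. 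The key identity I would use is
\[
\htop(\lambda)+\htop(-\lambda)=|\supp\lambda| .
\]
It forces every nonzero support size to lie in $\{2k,\ k+\ell,\ 2\ell\}$.

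\emph{Counting plan.} Let $B$ be the set of height-$\ell$ elements, so $|B|=b$, and let $A_0$ be the set of height-$k$ elements $\lambda$ with $-\lambda\in B$. Then $|A_0|=|B|$ minus the self-paired elements, so at most $b$ height-$k$ elements have negatives outside height $k$. The remaining set $A_1$ consists of elements $\lambda$ with $\htop(\lambda)=\htop(-\lambda)=k$ and support exactly $2k$. The main obstacle is bounding $|A_1|$ by roughly $(2b+1)(k+\ell)-2b-1$.

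\emph{Bounding $|A_1|$.} I would fix $\lambda\in A_1$ and consider sums $\lambda+\mu$ for $\mu\in A_1$. For each coordinate, the fractional parts satisfy $\{\lambda_i+\mu_i\}=\lambda_i+\mu_i-\epsilon_i$ with $\epsilon_i\in\{0,1\}$. Hence
\[
\htop(\lambda+\mu)=2k-\#\{\text{carries}\}\in\{0,k,\ell\}.
\]
If $\lambda+\mu$ has height $k$ or $0$, then many carries occur, which forces $\supp\mu$ to overlap $\supp\lambda$ heavily. This is essentially a simplex-code/Griesmer-type argument. Averaging over the cyclic projections onto coordinates, where each nontrivial coordinate has mean fractional part at least $1/4$, I would aim to show the following. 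Each coordinate of a fixed $(k+\ell)$-set determined by one element of $B\cup\{0\}$-translates can support only about $2b+1$ elements of $A_1$. This would yield $N\le(2b+1)(k+\ell)$.

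If that direct overlap count fails, my fallback is different. I would pass to the subgroup generated by $A_1$ and show it is elementary-abelian-like, resembling a simplex code in which all nonzero weights equal $2k$. Its order is then bounded by $2\cdot(\text{number of coordinates used})$. The number of coordinates is controlled by $k+\ell$ and the $2b+1$ elements of $B\cup(-B)\cup\{0\}$.
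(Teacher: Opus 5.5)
Part (i) matches the paper. The case $k=1$ is fine up to one slip. The only exception to Treutlein's bound in degree two is $h^*=1+7t+t^2$; Lawrence prisms have degree at most one and play no role. In that exceptional case $\vol=9>4b+4$, so your intermediate inequality fails. The final bound survives only because $9=3(2b+1)$, which is in fact an equality case of (ii).

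The genuine gap is the case $k\ge2$. Your setup there is correct: the empty simplex, the group, $|\supp(x)|=\htop(x)+\htop(-x)$, and the split into $B$, $A_0$, $A_1$. But the step ``bounding $|A_1|$'' contains no argument. The carry analysis of $\lambda+\mu$ and the coordinate averaging are never turned into an inequality. The statement about coordinates of a ``$(k+\ell)$-set'' supporting ``about $2b+1$ elements of $A_1$'' is not a well-defined claim, and nothing in it produces the product $(2b+1)(k+\ell)$.

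Your fallback fails on two counts. First, $\langle A_1\rangle$ is typically all of $G$, so it is not constant-weight. In Example~\ref{ex:replicated-triangles} one has $|G|=4b+4$ and $|A_1|=a-b=2b+3$. So $A_1\cup\{0\}$ is more than half of $G$ and generates $G$, which contains the $b$ elements of height $2k$. Second, a constant-weight subgroup need not have order linear in its number of coordinates. Take the two-dimensional simplex code over $\F_p$ and double each coordinate $c$ to $(c,-c)$. It lives on $n=2(p+1)$ coordinates, all its nonzero heights equal $k=p$, and its order is $p^2=k^2>2n$ for $p\ge5$.

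The missing mechanism has three steps:
\begin{enumerate}
\item $X=A_1\cup\{0\}$ is closed under taking cyclic subgroups. Multiples of $x\in A_1$ are supported in a set of size $2k<k+\ell$, whereas $y$ or $-y$ having height $\ell$ forces $|\supp(y)|\ge k+\ell$.
\item A group-theoretic avoidance lemma (Lemma~\ref{lem:avoidance}) then gives a subgroup $H\subseteq X$ with $q=[G:H]\le|G\setminus X|+1\le2b+1$.
\item If $m=|H|\le2k$ you are done. Otherwise the Batyrev--Hofscheier classification of $H$ (cyclic, or a repeated simplex code) lets you count supports over each coset. This gives $(\ell-k)L(C)\ge m-2k$ for every nonzero coset $C$, hence $(q-1)(m-2k)\le2b(\ell-k)$. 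Maximizing $mq$ over $q\in[2,2b+1]$ by convexity then gives $(2b+1)(k+\ell)$.
\end{enumerate}
This trade-off between the order of $H$ and its index is what your counting plan lacks. It cannot be bypassed by a bound on $|H|$ alone, since $m$ can be of order $k^2$.
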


Part (i) follows from Stanley's inequality \cite{StanleyHilbert}.
Part (ii) treats the complementary range $k<\ell\le2k$,
where that inequality does not bound $a$ in terms of $b$.
The volume bounds also hold when $a=0$, since then
$\vol(\mathcal{P})=b+1$.

For comparison, if $s\ge1$ is the degree and $b$ is the leading coefficient, \cite[Theorem~4.1]{HNP} yields
\[
 \vol(\mathcal{P})\le C_s b^{N_s},
 \qquad N_s=\frac{s^2+19s-4}{2},
\]
where $C_s$ depends only on $s$.
One may take
$C_s=N_s^{N_s}(N_s!)^{N_s+1}(8N_s)^{N_s^2}15^{\,N_s^2 2^{2N_s+1}}$,
by using the second term in the minimum in \cite[Theorem~4.1]{HNP}.
Thus this general estimate has degree $N_s$ in $b$, whereas Theorem~\ref{thm:general}(ii) is linear in $b$ and in $k+\ell$ separately.

The boundary case $\ell=2k$ leads to a Scott-type inequality.
For a lattice polygon with quadratic $h^*$-polynomial $1+at+bt^2$, Scott's theorem gives $a\le3b+3$, except for $(a,b)=(7,1)$ (see \cite{HaaseSchicho,Scott}).
Treutlein extended this bound to lattice polytopes of degree two in arbitrary dimension \cite{Treutlein}.
Motivated by these results, Higashitani, Nill, and the author \cite[Conjecture~1.3]{HNT} conjectured that
\[
 h^*_{\mathcal{P}}(t)=1+at^k+bt^{2k},\quad b\ge2
 \quad\Longrightarrow\quad
 \vol(\mathcal{P})\le4k(b+1).
\]
Our second theorem proves this Scott-type conjecture and determines when equality can occur.

\begin{theorem}\label{thm:critical}
Let $\mathcal{P}$ be a lattice polytope with
\[
 h^*_{\mathcal{P}}(t)=1+at^k+bt^{2k},
 \qquad a\ge1,\quad b\ge2,\quad k\ge1.
\]
Then
\[
 \vol(\mathcal{P})=a+b+1\le4k(b+1).
\]
If $k\ge2$, the inequality is strict, so
\[
 \vol(\mathcal{P})\le4k(b+1)-1.
\]
In particular, equality in $\vol(\mathcal{P})\le4k(b+1)$ holds if and only if $k=1$ and $a=3b+3$.
\end{theorem}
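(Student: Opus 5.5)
Since $h^*$ is not explicitly defined by a simplex, I would first reduce to the structure of $\mathcal{P}$ forced by the shape $1+at^k+bt^{2k}$. The plan is to treat $k=1$ and $k\ge2$ separately.

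\textbf{The case $k=1$.} Here the $h^*$-polynomial is quadratic, so Treutlein's extension of Scott's theorem \cite{Treutlein} gives $a\le 3b+3$ whenever $(a,b)\neq(7,1)$. Since $b\ge2$, this yields $\vol(\mathcal{P})=a+b+1\le 4b+4=4(b+1)$, with equality exactly when $a=3b+3$.

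\textbf{The case $k\ge2$.} Here the $h^*$-polynomial has no linear term, so $h^*_1=0$. Hence $\mathcal{P}$ has no lattice points besides its vertices and is a lattice simplex. I would then pass to the standard description by the finite abelian group $\Lambda=\Z^{d+1}/(\text{lattice spanned by }(v_i,1))$, of order $\vol(\mathcal{P})$. The height function $\htop(x)=\sum_i x_i$ on the fundamental parallelepiped, with $x\in[0,1)^{d+1}$ the coordinate vector of a group element, counts $h^*$: each group element contributes $t^{\htop}$. So $\Lambda$ has $a$ elements of height $k$, $b$ elements of height $2k$, and only $0$ at height $0$.

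I would then exploit two symmetries. The involution $x\mapsto -x$ sends height $h$ to $m(x)-h$, where $m(x)=|\supp x|$. Adding two height-$k$ elements gives height $2k$ minus an integer, namely the number of coordinates that wrap around. The key step is to choose one element $y$ of height $2k$ and study the height-$k$ elements in terms of $y$.
\begin{itemize}
\item[] For a height-$k$ element $x$, both $x+y$ and $x-y$ must have height in $\{0,k,2k\}$.
\item[] Each $x$ therefore produces coordinate-wise constraints on $\supp x$ relative to $\supp y$.
\end{itemize}
I expect these constraints to confine the height-$k$ elements to a few cosets. The target count is at most $(4k-1)(b+1)-1$ of them; together with the $b+1$ remaining elements this gives $\vol(\mathcal{P})\le 4k(b+1)-1$.

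A cleaner route for this step is available if the paper's general machinery applies. The proof of Theorem~\ref{thm:general}(ii) presumably bounds $\vol(\mathcal{P})$ by $(2b+1)(k+\ell)=(2b+1)3k$, which is too weak here. I would refine it at $\ell=2k$ by using the subgroup $H$ generated by the height-$2k$ elements, together with $0$, and showing that the quotient $\Lambda/H$ is cyclic of order at most $4k$. This cyclicity would come from the height-$k$ elements, whose pairwise sums must land in height $0$, $k$ or $2k$; that forces a simplex-code-like structure, consistent with the paper's keywords. Combined with $|H|\le b+1$, this gives the bound $4k(b+1)$. Strictness for $k\ge2$ would follow by showing that equality forces $|H|=b+1$ and a cyclic quotient of order exactly $4k$, and that this configuration requires an element of height $1$, contradicting $h^*_1=0$.

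\textbf{Main obstacle.} The hardest step is bounding the index $[\Lambda:H]\le 4k$, or, if $H$ is not exactly a subgroup, the number of height-$k$ elements per ``line'' through the $b$ top elements. I would first try the averaging identity: summing heights over a cyclic subgroup $\langle x\rangle$ of order $n$ gives $\sum_{j}\htop(jx)=\tfrac{n-1}{2}m(x)$. This constrains $n$ once the heights are restricted to $\{0,k,2k\}$, and in particular should force $n\le 4k$ when no multiple of $x$ has height $2k$ other than those in $H$.
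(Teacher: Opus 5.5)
Your $k=1$ case is the same as the paper's: Treutlein's theorem, with $b\ge2$ excluding $(7,1)$. For $k\ge2$, however, there is a genuine gap. The first route stops at ``I expect these constraints to confine\dots''. The ``cleaner route'' rests on a false claim: for $b\ge2$, the subgroup $H=\langle B\rangle$ generated by the height-$2k$ elements is all of $G$. Here is why. The average of $\htop$ over a subgroup $A$ equals $\frac12\sum_i(1-1/d_i(A))$, where $d_i(A)$ is the order of the $i$th coordinate image, so the average can only grow from $\langle B\rangle$ to $G$. Since $B\subseteq\langle B\rangle$, this reads $k(1+(b-1)/|\langle B\rangle|)\le k(1+(b-1)/|G|)$, which forces $\langle B\rangle=G$ (Proposition~\ref{prop:b-only}). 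So $|H|\le b+1$ would say $a+b+1\le b+1$, and the quotient $G/H$ is trivial; in Example~\ref{ex:replicated-triangles}, $|\langle B\rangle|=4(b+1)$. If you instead mean the set $\{0\}\cup B$, it is not a subgroup in general, so there is no index to bound.

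Your averaging identity is also inaccurate. Over $\langle x\rangle$ of order $n$, the height sum is $\frac n2\sum_{i\in\supp(x)}(1-1/d_i)$, with $d_i$ the order of $x_i$. This equals $\frac{n-1}{2}|\supp(x)|$ only when every nonzero coordinate has order $n$. More importantly, averaging alone cannot bound $n$: repeating $(1/n,(n-1)/n)$ $k$ times gives a cyclic group of arbitrary order $n$ whose nonzero elements all have height $k$. Any bound on such a subgroup must come from its interaction with $B$.

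A second warning sign is that your $k\ge2$ sketch never uses $b\ge2$, yet the inequality is false for $b=1$. The ternary family of Proposition~\ref{prop:b-one} has volume $9k>8k=4k(b+1)$ when $k=3^s$. So your proposed ``height-$1$ element'' mechanism for strictness cannot be the right one.

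The paper's decomposition is the reverse of yours: the index is controlled by $b$ and the subgroup order by $k$. It takes $H$ inside $X=\{0\}\cup\{x:\htop(x)=\htop(-x)=k\}$, that is, avoiding $B\cup(-B)$. Since $X$ is closed under taking cyclic subgroups, Lemma~\ref{lem:avoidance} gives such an $H$ with $q=[G:H]\le2b+1$. Because $H$ has $h^*$-polynomial $1+(m-1)t^k$, the Batyrev--Hofscheier classification (Proposition~\ref{prop:BH}) makes $H$ one of three things:
\begin{enumerate}
\item small, with $m\le2k$, whence $|G|\le2k(2b+1)$;
\item cyclic with $|S|=2k$;
\item a repeated simplex code $(\Z/p\Z)^r$ with $m=2kp/\lambda$.
\end{enumerate}
Counting supports over each nonzero coset $C$ gives exact formulas for $L(C)=|C\cap B|+|(-C)\cap B|$, and these values sum to $2b$. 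This yields $|G|\le4(b+1)$ in the cyclic case and $|G|\le8b-2$ for $p\ge5$. For $p=2$ one gets $|G|=4kq$ with $q\le b+1$, and for $p=3$ one gets $|G|=3kq$ with $3q\le4(b+1)$.

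Strictness in these last two cases is exactly where $b\ge2$ enters. For $p=2$, equality forces $G/H$ to be cyclic of order $2$, so $b=1$. For $p=3$, equality forces a subgroup of $G/H$ of order $q/2+1$ with $q\ge4$, which is impossible. None of these ingredients is present in your proposal.
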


For every $b\ge2$, the rectangle $[0,2]\times[0,b+1]$ attains equality when $k=1$.
Indeed, its Ehrhart polynomial is $(2j+1)((b+1)j+1)$, so its $h^*$-polynomial is $1+(3b+3)t+bt^2$.
On the boundary $\ell=2k$, Theorem~\ref{thm:general}(ii) gives $3k(2b+1)$. Theorem~\ref{thm:critical} improves this bound by $k(2b-1)$.

The assumption $b\ge2$ in Theorem~\ref{thm:critical} is essential.
The case $b=1$ with $k\ge2$ is covered by the classification in
\cite[Theorem~3.1]{HNT}. See also \cite[Corollary~1.1]{HNT}.
We record the following consequence for comparison.

\begin{proposition}[{\cite[Theorem~3.1]{HNT}}]\label{prop:b-one}
Let $k\ge2$ and let $\mathcal{P}$ be a lattice polytope with
\[
 h^*_{\mathcal{P}}(t)=1+at^k+t^{2k},\qquad a\ge1.
\]
Then either $\vol(\mathcal{P})\le8k$, or
\[
 \vol(\mathcal{P})=9k
 \qquad\text{and}\qquad
 k=3^s\text{ for some integer }s\ge1.
\]
The second alternative is attained for every $s\ge1$
by the ternary family in \cite[Theorem~3.1(c)]{HNT}.\footnote{The assertion in \cite[Section~1.4]{HNT} that equality in $\vol(\mathcal{P})\le9k$ implies $k=1$ should also include the ternary family in \cite[Theorem~3.1(c)]{HNT}, which attains equality for $k=3^s$, $s\ge1$.}
\end{proposition}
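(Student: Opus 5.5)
This proposition is recorded as a consequence of the classification in \cite[Theorem~3.1]{HNT}, so the plan is to read it off that classification rather than argue from scratch. First I would recall the structure behind that classification. Since $k\ge2$, the coefficient $h^*_1$ vanishes, so $\mathcal{P}$ has no lattice points besides its vertices. An $h^*$-polynomial of the shape $1+at^k+t^{2k}$ with leading coefficient $1$ forces a very rigid situation. Via the Cayley/lattice-pyramid reductions used in \cite{HNT}, one reduces to a lattice simplex $\Delta$ whose box points form a finite abelian group $\Lambda$ with $|\Lambda|=\vol(\Delta)=a+2$. The height function on $\Lambda$ takes only the values $0,k,2k$, and there is a unique element of height $2k$.

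Next I would go through the cases (a), (b), (c) of \cite[Theorem~3.1]{HNT} one at a time and compute $\vol(\mathcal{P})=a+2$ in each.
\begin{itemize}
\item[(a)/(b)] In the binary-type families, which come from simplex-code or $\F_2$-linear constructions, the group is an elementary abelian $2$-group, or close to one. Here the height constraint $\htop(x)+\htop(-x)$ bounds the order, and I expect $a+2\le 8k$ in these cases.
\item[(c)] In the ternary family, $\Lambda\cong\F_3^{m}$ and the heights are governed by a ternary simplex code. The weight $k$ of the codewords then forces $k=3^{s}$, and the order comes out to exactly $9k$.
\end{itemize}

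Combining the cases gives the dichotomy: either $\vol(\mathcal{P})\le8k$, or $\vol(\mathcal{P})=9k$ with $k=3^s$. The condition $s\ge1$ is automatic from $k\ge2$. For attainment, I would simply cite the explicit ternary examples in \cite[Theorem~3.1(c)]{HNT}. For each $s\ge1$ these have $h^*$-polynomial $1+(9k-2)t^k+t^{2k}$ with $k=3^s$, and this can be checked by counting box points by height.

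The main obstacle is bookkeeping rather than new mathematics. One must confirm that every case of the classification other than the ternary one really satisfies $\vol\le 8k$. One must also check that the volumes in case (c) are exactly $9k$, including the correct volume count after the pyramid reductions. This is the step where the footnote's correction to \cite[Section~1.4]{HNT} arises, and I would verify it by directly recomputing the order of the group in case (c).
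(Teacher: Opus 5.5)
Your plan matches the paper, which gives no independent proof here and simply records the dichotomy as a consequence of the classification in \cite[Theorem~3.1]{HNT}, with the value $9k$ for $k=3^s$ supplied by the ternary family in case (c); that family is also what the footnote's correction to \cite[Section~1.4]{HNT} concerns. One caution: your descriptions of the other cases are guesses and are not accurate in general (e.g., the $k$-fold replication of $\operatorname{conv}((-1,-1),(2,-1),(-1,1))$, as in the construction of Example~\ref{ex:replicated-triangles}, has $h^*$-polynomial $1+4t^k+t^{2k}$ and cyclic group $\Z/6\Z$, not a $2$-group), so the bound $\vol(\mathcal{P})\le8k$ outside case (c) must be read off from the volumes actually listed in \cite[Theorem~3.1]{HNT}, not from a structural heuristic.
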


\begin{remark}\label{rem:general-sharp-b-one}
Theorem~\ref{thm:general}(ii) is sharp when $b=1$.
The triangle with vertices $(0,0)$, $(3,0)$, and $(0,3)$ has $h^*$-polynomial $1+7t+t^2$ and normalized volume $9=3(1+2)$.
For $k=3^s$ with $s\ge1$, the ternary family in Proposition~\ref{prop:b-one} has $h^*$-polynomial $1+(9k-2)t^k+t^{2k}$ and normalized volume $9k=3(k+2k)$.
Thus the factor $2b+1$ cannot be uniformly reduced at $b=1$, whereas Theorem~\ref{thm:critical} improves the boundary estimate once $b\ge2$.
The optimal bounds for $k<\ell<2k$, and for $\ell=2k$ with $k\ge2$ and $b\ge2$, are not determined here.
\end{remark}

For $b\ge2$, the dependence on $k$ can be removed at the cost of a much larger dependence on $b$.
Proposition~\ref{prop:b-only} gives $\vol(\mathcal{P})\le(8b^2)^b$ when $\ell=2k$ and $k\ge2$.
In particular, the normalized volume is bounded as $k$ varies with $b\ge2$ fixed, unlike the case $b=1$.
Example~\ref{ex:replicated-triangles} realizes normalized volume $4(b+1)$ for every $k,b\ge1$.
In two of the structural cases, Proposition~\ref{prop:structural-linear} gives bounds independent of $k$ and linear in $b$. These are $4(b+1)$ in the cyclic case and $8b-2$ in the simplex-code case with $p\ge5$.
These estimates lead to Question~\ref{que:uniform-linear}, which asks whether a bound linear in $b$ holds uniformly in $k$.

The paper is organized as follows.
Section~\ref{sec:group} recalls the finite-group description of lattice simplices and establishes the results on constant-height subgroups.
In Section~\ref{sec:general-bound}, we derive the coset-counting formulas and prove Theorem~\ref{thm:general}.
Section~\ref{sec:critical} treats the case $\ell=2k$, proves the structural linear bounds and Theorem~\ref{thm:critical}, and concludes with the bound depending only on $b$, the family of examples, and a question about a uniform linear bound.

\subsection*{Acknowledgment}
This work was supported by JSPS KAKENHI Grant Numbers
JP22K13890 and JP26K00618.
\section{Finite groups and constant-height subgroups}\label{sec:group}

Throughout this section, let $\Delta\subset\R^d$ be a $d$-dimensional lattice simplex with
\[
 h^*_{\Delta}(t)=1+at^k+bt^\ell,
 \qquad a,b\ge1,\quad 2\le k<\ell\le2k.
\]
The restriction to simplices is automatic for these
$h^*$-polynomials.
Indeed, any $d$-dimensional lattice polytope with $h^*_1=0$
has exactly $d+1$ lattice points and is therefore an empty simplex.
Thus the only lattice points of $\Delta$ are its vertices.

Set $\T=\R/\Z$, written additively, and represent its elements by numbers in $[0,1)$.
For every positive integer $N$, write
\[
 \T[N]:=\{z\in\T:Nz=0\}
 =\left\{\frac{j}{N}+\Z:j=0,\ldots,N-1\right\},
\]
the unique subgroup of $\T$ of order $N$.
For a prime $p$, we identify $\T[p]$ with the additive group of $\F_p$ via $j/p+\Z\mapsto j\bmod p$.

For ordered vertices $v_0,\ldots,v_d$ of $\Delta$, define
\[
 G=\Lambda_\Delta
 =\left\{x\in[0,1)^{d+1}:
 \sum_{i=0}^d x_i(v_i,1)\in\Z^{d+1}\right\}.
\]
This is a finite subgroup of $\T^{d+1}$, with addition taken coordinatewise modulo $\Z$.
For $x\in G$, put
\[
 \htop(x)=\sum_{i=0}^d x_i,
 \qquad \supp(x)=\{i\in\{0,\ldots,d\}:x_i\ne0\}.
\]
The finite-group description of lattice simplices gives
\[
 h^*_j=|\{x\in G:\htop(x)=j\}|,
 \qquad |G|=\vol(\Delta)=a+b+1;
\]
see \cite[Theorem~2.3 and Proposition~2.6]{BHCode}.
Thus every nonzero element of $G$ has height $k$ or $\ell$.

For $x\in G$ and $i\in\{0,\ldots,d\}$, the coordinate $(-x)_i$ equals $0$ when $x_i=0$ and $1-x_i$ otherwise.
Consequently,
\begin{equation}\label{eq:support-height}
 |\supp(x)|=\htop(x)+\htop(-x).
\end{equation}
In particular,
\begin{equation}\label{eq:min-support}
 |\supp(x)|\ge2k\qquad(x\in G\setminus\{0\}).
\end{equation}
Define
\[
 B=\{x\in G:\htop(x)=\ell\},\qquad |B|=b,
\]
and
\[
 X=\{0\}\cup\{x\in G:\htop(x)=\htop(-x)=k\}.
\]

\begin{lemma}\label{lem:cyclic-closed}
If $x\in X$, then $\langle x\rangle\subseteq X$.
Moreover, the complement of $X$ satisfies
\[
 G\setminus X=B\cup\{-x:x\in B\},
 \qquad |G\setminus X|\le2b.
\]
\end{lemma}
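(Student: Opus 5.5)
The plan is to get the cyclic closure of $X$ from a support-size argument based on \eqref{eq:support-height}, and then to read off the description of $G\setminus X$ directly from the fact that every nonzero element of $G$ has height $k$ or $\ell$.

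\emph{Closure under multiples.} Let $x\in X$ and $m\in\Z$, and put $y=mx$. If $x=0$ or $y=0$ there is nothing to show, so assume both are nonzero.
\begin{enumerate}
\item Coordinatewise, $y_i=mx_i$ in $\T$. Hence $x_i=0$ forces $y_i=0$, and so $\supp(y)\subseteq\supp(x)$.
\item Since $x\in X\setminus\{0\}$, we have $\htop(x)=\htop(-x)=k$. By \eqref{eq:support-height}, $|\supp(x)|=2k$, and therefore $|\supp(y)|\le 2k$.
\item Since $y$ and $-y$ are nonzero elements of $G$, each of $\htop(y)$ and $\htop(-y)$ lies in $\{k,\ell\}$. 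If either one equals $\ell$, then \eqref{eq:support-height} gives $|\supp(y)|\ge k+\ell>2k$, which contradicts the previous step.
\end{enumerate}
Thus $\htop(y)=\htop(-y)=k$, that is, $y\in X$. This proves $\langle x\rangle\subseteq X$.

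\emph{The complement.} The element $0$ lies in $X$. A nonzero $x\in G$ fails to lie in $X$ exactly when $\htop(x)\neq k$ or $\htop(-x)\neq k$. Because both heights lie in $\{k,\ell\}$, this happens exactly when $\htop(x)=\ell$ or $\htop(-x)=\ell$, that is, when $x\in B$ or $-x\in B$. Conversely, every element of $B\cup(-B)$ is nonzero, since $\htop(0)=0\neq\ell$, and it lies outside $X$. Hence $G\setminus X=B\cup\{-x:x\in B\}$.

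\emph{Counting.} The union of $B$ and $-B$ has at most $|B|+|-B|=2b$ elements, so $|G\setminus X|\le 2b$.

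The only real point in the argument is the support-inclusion step, which uses that multiplication by $m$ cannot enlarge the support, combined with the strict inequality $k+\ell>2k$. The rest is bookkeeping.
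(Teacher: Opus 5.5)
Your proposal is correct and follows essentially the same route as the paper: both get $|\supp(x)|=2k$ from \eqref{eq:support-height}, use $\supp(jx)\subseteq\supp(x)$ together with $k+\ell>2k$ to exclude height $\ell$ for any nonzero multiple or its inverse, and then read off $G\setminus X=B\cup(-B)$ from the fact that nonzero heights lie in $\{k,\ell\}$. Your added checks (that elements of $B\cup(-B)$ are nonzero, and the explicit union bound) are details the paper leaves implicit.
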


\begin{proof}
Let $x\in X\setminus\{0\}$.
Then $|\supp(x)|=2k$ by \eqref{eq:support-height}, and $\supp(jx)\subseteq\supp(x)$ for every integer $j$.
If a nonzero multiple $jx$ had height $\ell$, then
\[
 |\supp(jx)|=\htop(jx)+\htop(-jx)\ge k+\ell>2k,
\]
a contradiction.
Thus all nonzero multiples of $x$, including their inverses, have height $k$.
The remaining assertions follow because an element lies outside $X$ exactly when it or its inverse belongs to $B$.
\end{proof}

The following lemma supplies a subgroup of controlled index.
Groups are written additively, with identity $0$.
We write $H\le A$ to mean that $H$ is a subgroup of $A$, and $[A:H]$ for its index.

\begin{lemma}\label{lem:avoidance}
Let $A$ be a finite abelian group, and let $X\subseteq A$ satisfy
\[
 0\in X,\qquad x\in X\Longrightarrow\langle x\rangle\subseteq X.
\]
Then there is a subgroup $H\le A$ such that
\[
 H\subseteq X,\qquad [A:H]\le|A\setminus X|+1.
\]
\end{lemma}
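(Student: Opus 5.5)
I would argue by induction on $|A|$, passing to a quotient by a small subgroup contained in $X$. Put $m=|A\setminus X|$. The hypothesis says exactly that $X$ is closed under taking integer multiples. Equivalently, $A\setminus X$ is closed under taking ``divisors'': if $jy\notin X$ for some integer $j$, then $y\notin X$.

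Two cases are immediate. If $X=\{0\}$, then $H=\{0\}$ works, since $[A:\{0\}]=|A|=m+1$. If $X=A$, then $H=A$ works. So assume $X$ contains some $x\ne0$. By the closure hypothesis $\langle x\rangle\subseteq X$, and $\langle x\rangle$ contains a subgroup $K$ of prime order $p$, so $K\subseteq X$. Now pass to $\bar A=A/K$ and set
\[
 \bar X=\{\,y+K:\ \langle y\rangle+K\subseteq X\,\}.
\]
I would check three things in turn.

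First, $\bar X$ is closed under cyclic subgroups and contains $0$. This holds because $\langle jy\rangle+K\subseteq\langle y\rangle+K$, and because $K\subseteq X$.

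Second, the count: I want $|\bar A\setminus\bar X|\le m$.

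Third, the lift. By induction $\bar A$ contains a subgroup $\bar H\subseteq\bar X$ with $[\bar A:\bar H]\le|\bar A\setminus\bar X|+1$. Let $H$ be its full preimage in $A$. Every element $y\in H$ satisfies $y\in\langle y\rangle+K\subseteq X$, so $H\subseteq X$. Moreover $[A:H]=[\bar A:\bar H]$. So the second point is all that remains.

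The second point is the main obstacle. It would follow from an injection from bad cosets (those $y+K\notin\bar X$) into $A\setminus X$. Take a bad coset $C=y+K$. Some element $jy+k$ of $\langle y\rangle+K$ lies outside $X$. If $p\nmid j$, then $k=jk'$ for some $k'\in K$, and $jy+k=j(y+k')$. By the divisor-closure of the complement, $y+k'\notin X$, so $C$ itself meets $A\setminus X$.

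The trouble is the case where only multiples $jy$ with $p\mid j$ leave $X$. Then $C$ may lie entirely inside $X$, and the bad element sits in a different coset $jy+K$, so several bad cosets could be charged to the same bad element. My first attempt would be to choose $K$ cleverly to prevent this: take $K$ maximal-order among subgroups in $X$ of prime order for a suitable prime, or take $p$ coprime to the orders of the elements of $A\setminus X$ when possible.

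Failing that, I would strengthen the induction. I would fix a maximal subgroup $H\subseteq X$ and show directly that every coset $c+H\ne H$ contains an element of $A\setminus X$. If some coset $c+H$ lay entirely in $X$, I would try to enlarge $H$ to $H+\langle c'\rangle\subseteq X$ for a suitable $c'\in c+H$, for instance a representative whose order is minimal, using divisor-closure as above. This would contradict maximality and give $[A:H]\le m+1$ at once.
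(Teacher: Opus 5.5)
\textbf{The gap is the counting step.} Your formal reduction is sound: $\bar X$ is closed under cyclic subgroups, and the lift is correct. But the whole lemma rests on the inequality $|\bar A\setminus\bar X|\le m$. You leave it open, and it is false for some admissible choices of $K$.

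Take $A=\Z/4\Z\times\Z/2\Z$ and $X=A\setminus\{(2,1)\}$. Since $(2,1)$ is a multiple of no other element, $X$ satisfies the hypothesis, and $m=1$. For $K=\langle(0,1)\rangle$ one has $(2,1)\in\langle y\rangle+K$ for every $y\notin K$. Hence $\bar X=\{0\}$ in $\bar A\cong\Z/4\Z$, so $|\bar A\setminus\bar X|=3$, and the induction returns $H=K$ with index $4>m+1$. The cosets $(1,0)+K$ and $(3,0)+K$ lie inside $X$, and both are charged to $(2,1)=2\cdot(1,0)+(0,1)$, which is exactly the scenario you flagged.

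Neither of your selection rules fixes this. The only prime available is $2$, which divides the order of $(2,1)$, and every candidate $K$ has order $2$. The choice $K=\langle(2,0)\rangle=2A$ happens to work here, but you give no rule that selects it and no proof that a good $K$ always exists.

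\textbf{The fallback is false as stated.} In the same example, $H=\langle(0,1)\rangle$ is inclusion-maximal among subgroups contained in $X$: the only subgroup of order $4$ containing it is $\{0,2\}\times\Z/2\Z$, which contains $(2,1)$. Yet $[A:H]=4>2$. The coset $(1,0)+H$ lies in $X$, and $H+\langle c'\rangle=A$ for both $c'\in(1,0)+H$, so the enlargement step cannot succeed. This is why the paper first obtains $H_0$ from the lemma and only afterwards enlarges it.

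\textbf{The missing idea} is to control the case $p\mid j$ through the map $\mu_p\colon a\mapsto pa$, rather than through a quotient by a subgroup of prime order. Write $Y=A\setminus X$. Since $pa\in Y$ forces $a\in Y$, each bad element of $pA$ has $P=|\ker\mu_p|$ bad preimages. This gives $|Y|=P|Y\cap pA|+M$, where $M$ counts the remaining bad elements. The paper then splits into two cases.

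If $Y\cap pA\ne\emptyset$, the paper applies induction inside the subgroup $pA$ to get $H_1$. It keeps $H_1$ when $M+1\ge P$. Otherwise it applies induction again inside $\mu_p^{-1}(H_1)$, which contains at most $M$ bad elements, and uses $(|Y\cap pA|+1)(M+1)\le P|Y\cap pA|+M+1$.

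If $pA\subseteq X$, the paper quotients by all of $pA$, not by a subgroup of prime order. In the $\F_p$-vector space $A/pA$, the image of $Y$ is $\F_p^\times$-stable. A maximal subspace avoiding this image therefore has every nonzero coset meeting it, and its preimage has index at most $|Y|+1$.

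Passing to subgroups, where the hypothesis on $X$ restricts verbatim, is what avoids the overcounting your quotient construction runs into.
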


\begin{proof}
We argue by induction on $|A|$.
If $|A|=1$, then $X=A=\{0\}$, and we take $H=A$.
Assume $|A|>1$ and that the assertion holds for all smaller finite abelian groups and all subsets satisfying the stated hypotheses.
Put $Y=A\setminus X$.
We may assume $Y\ne\emptyset$, since otherwise $H=A$ suffices.

Choose a prime $p$ dividing $|A|$, and consider the homomorphism
\[
 \mu_p:A\longrightarrow A,\qquad a\longmapsto pa.
\]
Set
\[
 A_1=\operatorname{im}\mu_p,\quad K_p=\ker\mu_p,
 \quad P=|K_p|,\quad Y_1=Y\cap A_1.
\]
Cauchy's theorem gives $P>1$.
The first isomorphism theorem therefore yields
\[
 |A_1|=|A|/P<|A|,\qquad [A:A_1]=P.
\]
For $y\in A_1$, choose $a_0\in A$ with $\mu_p(a_0)=y$.
Then
\[
 \{a\in A:\mu_p(a)=y\}=a_0+K_p,
\]
so the equation $\mu_p(a)=y$ has exactly $P$ solutions.
It follows that $|\mu_p^{-1}(Y_1)|=P|Y_1|$.
On the other hand, for every $a\in A$, the hypothesis on $X$ implies
\[
 pa\in Y\Longrightarrow a\in Y.
\]
Indeed, $a\in X$ would imply $pa\in\langle a\rangle\subseteq X$.
Thus $\mu_p^{-1}(Y_1)\subseteq Y$, and the disjoint decomposition
\[
 Y=\mu_p^{-1}(Y_1)\sqcup\bigl(Y\setminus\mu_p^{-1}(Y_1)\bigr)
\]
gives
\[
 |Y|=P|Y_1|+M,\qquad M=|Y\setminus\mu_p^{-1}(Y_1)|.
\]

Suppose first that $Y_1\ne\emptyset$.
The subset $X_1=X\cap A_1$ contains $0$.
For $x\in X_1$, the subgroup $\langle x\rangle$ is contained in both $X$ and $A_1$, hence in $X_1$.
Moreover, $A_1\setminus X_1=Y_1$.
Induction applied to $(A_1,X_1)$ gives $H_1\le A_1$ with
\[
 H_1\subseteq X_1,\qquad [A_1:H_1]\le|Y_1|+1.
\]
If $M+1\ge P$, take $H=H_1$.
Its index then satisfies
\[
 [A:H_1]=P[A_1:H_1]
 \le P(|Y_1|+1)\le P|Y_1|+M+1=|Y|+1.
\]

Otherwise, $M+1<P$.
Let $A_2=\mu_p^{-1}(H_1)$.
Since $Y_1\ne\emptyset$ and $H_1\cap Y_1=\emptyset$, the subgroup $H_1$ is proper in $A_1$.
Surjectivity of $\mu_p:A\to A_1$ implies that $A_2$ is proper in $A$.
If $a\in Y\cap A_2$, then $\mu_p(a)\in H_1\subseteq X_1$.
Since $X_1\cap Y_1=\emptyset$, we have $a\notin\mu_p^{-1}(Y_1)$.
Therefore one has
\[
 Y\cap A_2\subseteq Y\setminus\mu_p^{-1}(Y_1),
 \qquad |Y\cap A_2|\le M.
\]
As above, $X_2=X\cap A_2$ contains $0$ and contains $\langle x\rangle$ for every $x\in X_2$. Moreover, $A_2\setminus X_2=A_2\cap Y$.
Applying induction to $(A_2,X_2)$ gives $H\le A_2$ with $H\subseteq X_2$ and $[A_2:H]\le M+1$.
The surjective homomorphism
\[
 A\longrightarrow A_1/H_1,\qquad a\longmapsto\mu_p(a)+H_1
\]
has kernel $A_2$, so $A/A_2\cong A_1/H_1$.
Consequently,
\[
 \begin{aligned}
 [A:H]&=[A:A_2][A_2:H]\\
 &\le(|Y_1|+1)(M+1)\\
 &\le P|Y_1|+M+1=|Y|+1,
 \end{aligned}
\]
where the last inequality uses $M+1\le P$.

It remains to treat $Y_1=\emptyset$.
Then $A_1\subseteq X$.
Let $\pi:A\to V=A/A_1$ be the quotient map.
For every $a\in A$, one has $p(a+A_1)=0+A_1$.
Since $V$ is finite, it is a finite-dimensional $\F_p$-vector space.
Put $\overline Y=\pi(Y)$.
The zero element $0+A_1$ of $V$ does not belong to $\overline Y$. Indeed, if $\pi(y)=0+A_1$ for some $y\in Y$, then $y\in Y\cap A_1=Y_1$, a contradiction.

We claim that $\overline Y$ is stable under multiplication by $\F_p^\times$.
Fix $y\in Y$ and $c\in\F_p^\times$, and put $N=|\langle y\rangle|$.
Since $0\in X$ and $y\in Y=A\setminus X$, we have $y\ne0$ and $N>1$.
Let $c_0\in\{1,\ldots,p-1\}$ be the integer representing $c$, and let $q_1,\ldots,q_s$ be the distinct prime divisors of $N$ other than $p$.
If $s=0$, take $n=c_0$.
If $s\ge1$, the Chinese remainder theorem gives an integer $n$ such that
\[
 n\equiv c_0\pmod p,\qquad n\equiv1\pmod{q_i}\quad(1\le i\le s).
\]
In either case, $n\equiv c_0\pmod p$ and no prime divisor of $N$ divides $n$.
Thus $\gcd(n,N)=1$, and $\langle ny\rangle=\langle y\rangle$.
If $ny\in X$, the hypothesis on $X$ would give $y\in X$.
Hence $ny\in Y$, and $c\pi(y)=\pi(ny)\in\overline Y$.

The zero subspace is disjoint from $\overline Y$.
Choose a subspace $U\le V$ of maximum dimension among those disjoint from $\overline Y$.
Let $v\in V\setminus U$.
By maximality, $(U+\F_pv)\cap\overline Y\ne\emptyset$.
Choose $u\in U$ and $\alpha\in\F_p$ such that $u+\alpha v\in\overline Y$.
Since $U\cap\overline Y=\emptyset$, we have $\alpha\ne0$.
The stability of $\overline Y$ under multiplication by $\F_p^\times$ gives
\[
 v+\alpha^{-1}u\in\overline Y\cap(v+U).
\]
Thus every nonzero coset $v+U$ contains an element of $\overline Y$.
There are $|V/U|-1$ such cosets, and distinct cosets are disjoint.
Consequently,
\[
 |V/U|-1\le|\overline Y|\le|Y|.
\]
Finally, set $H=\pi^{-1}(U)$.
Then $H\cap Y=\emptyset$, since otherwise $U\cap\overline Y\ne\emptyset$.
The composite map
\[
  \varphi:A\longrightarrow V/U,\qquad
  a\longmapsto\pi(a)+U
\]
is surjective, and
\[
  \ker\varphi
  =\{a\in A:\pi(a)\in U\}
  =\pi^{-1}(U)=H.
\]
Hence the first isomorphism theorem gives $A/H\cong V/U$.
Therefore $H\subseteq X$ and $[A:H]=|V/U|\le|Y|+1$.
\end{proof}

\begin{remark}\label{rem:avoidance-sharp}
The bound is sharp. Indeed, if $X=\{0\}$ in a finite abelian group $A$, the only possible subgroup is $\{0\}$, and its index is $|A|=|A\setminus X|+1$.
\end{remark}

Apply Lemma~\ref{lem:avoidance} to $A=G$ and the set $X$ above, and denote the resulting subgroup by $H_0$.
Fix a subgroup $H\le G$ of maximum order among those satisfying $H_0\subseteq H\subseteq X$.
Such a choice exists because $G$ is finite.
Then $H$ is inclusion-maximal among all subgroups contained in $X$, and enlarging $H_0$ does not increase its index.
Write
\[
 m=|H|,\qquad q=[G:H].
\]
Since $|B|=b\ge1$ and $B\cap X=\emptyset$, the subgroup $H\subseteq X$ is proper in $G$.
Together with Lemmas~\ref{lem:avoidance} and~\ref{lem:cyclic-closed}, this gives
\begin{equation}\label{eq:index-range}
 2\le q=[G:H]\le[G:H_0]\le|G\setminus X|+1\le2b+1.
\end{equation}
If $m\le2k$, then \eqref{eq:index-range} yields
\[
 |G|=mq\le2k(2b+1)<(2b+1)(k+\ell).
\]
We now assume $m>2k$ and describe the possible subgroups $H$.

Let
\[
 S=\{i\in\{0,\ldots,d\}:\text{some }h\in H\text{ satisfies }h_i\ne0\},
 \qquad n=|S|.
\]
After deleting the coordinates outside $S$, the group $H$ corresponds to a lattice simplex of dimension $n-1$ with $h^*$-polynomial $1+(m-1)t^k$.
This follows from the correspondence between lattice simplices and finite subgroups with integral height \cite[Theorem~2.3 and Proposition~2.6]{BHCode}.
No coordinate of the resulting group vanishes identically, so the simplex is not a lattice pyramid by \cite[Proposition~2.5]{BHCode}.
A lattice pyramid is a polytope equivalent, under an affine lattice isomorphism, to $\operatorname{conv}(\mathcal{P}'\times\{0\},(0,\ldots,0,1))$ for some lattice polytope $\mathcal{P}'$.
We use the structural part of the Batyrev--Hofscheier classification, namely \cite[Theorem~3.11]{BHCode}.
See also \cite[Theorems~2.3 and~2.5]{HNT} for a summary of the binomial classification.

\begin{proposition}\label{prop:BH}
Assume $m>2k$. Then exactly one of the following holds.
\begin{enumerate}
\item\label{item:cyclic}
One has $n=2k$.
The group $H$ is cyclic of order $m$, and every coordinate projection $H\to\T$, $h\mapsto h_i$ for $i\in S$, is injective.
\item\label{item:code}
One has $n>2k$.
There are a prime $p$ and an integer $r\ge2$ such that
\[
 H\cong(\Z/p\Z)^r,\qquad m=p^r.
\]
Moreover, the number $\lambda=2k/p^{r-1}$ is a positive integer and satisfies
\begin{equation}\label{eq:code-relations}
 n=\lambda\frac{p^r-1}{p-1},
 \qquad \frac{m}{2k}=\frac p\lambda.
\end{equation}
Under the identification $\T[p]\cong\F_p$, every coordinate projection on $S$ is a nonzero $\F_p$-linear functional on $H$.
If $p$ is odd, then $\lambda$ is even.
\end{enumerate}
\end{proposition}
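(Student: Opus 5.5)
The plan is to reduce Proposition~\ref{prop:BH} to the Batyrev--Hofscheier structure theorem \cite[Theorem~3.11]{BHCode} for lattice simplices with binomial $h^*$-polynomial $1+(m-1)t^k$ that are not lattice pyramids. As noted just before the statement, deleting the coordinates outside $S$ turns $H$ into exactly such a simplex, of dimension $n-1$.

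The main steps are:
\begin{itemize}[label=$\circ$]
\item Since $H\subseteq X$, every nonzero $h\in H$ satisfies $\htop(h)=\htop(-h)=k$. By \eqref{eq:support-height} this gives $|\supp(h)|=2k$, so the restricted group has constant support size $2k$ on nonzero elements.
\item Invoke \cite[Theorem~3.11]{BHCode}. I recall it says that such a non-pyramid simplex is either a circuit-like (cyclic) simplex with $n=2k$, or corresponds to a replicated $p$-ary simplex code $(\Z/p\Z)^r$ with multiplicity $\lambda$. In the second case $p$ must be $2$ or $\lambda$ must be even, so that heights are integral and constant.
\item Derive the numerical relations in case~\ref{item:code} directly from this structure. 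Each coordinate is a nonzero linear functional, and each of the $(p^r-1)/(p-1)$ projective classes of functionals appears $\lambda$ times (up to scalar). This gives $n=\lambda(p^r-1)/(p-1)$.
\item A nonzero $h$ lies in the kernel of exactly $(p^{r-1}-1)/(p-1)$ classes. Hence $|\supp(h)|=\lambda p^{r-1}$, and equating with $2k$ gives $\lambda=2k/p^{r-1}$ and $m/(2k)=p^r/(\lambda p^{r-1})=p/\lambda$.
\end{itemize}

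Next I would show that the hypothesis $m>2k$ separates the cases and excludes degenerate ones.
\begin{itemize}[label=$\circ$]
\item In case~\ref{item:cyclic}, $n=2k$ means every nonzero $h$ has full support $S$. So each coordinate projection has trivial kernel on $H$, hence is injective, and $H$ embeds in $\T$, making it cyclic.
\item In the code case, $r\ge2$ is forced. If $r=1$, then $H\cong\Z/p\Z$ has every nonzero element of full support, so $n=2k$ and we are in the cyclic case. Thus the dichotomy is exactly $n=2k$ versus $n>2k$, and the two alternatives are mutually exclusive.
\item Evenness of $\lambda$ for odd $p$ comes from the height computation. The height of $h$ is the sum over coordinates of $(\text{value})/p$. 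For odd $p$ this is constant only when the scalar multiples of each functional are balanced, which forces evenness in \cite{BHCode}. I would cite this rather than reprove it.
\end{itemize}

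I expect the main obstacle to be matching the exact hypotheses and conclusions of \cite[Theorem~3.11]{BHCode} to the present setting. That theorem may classify under a condition like "$h^*$ has exactly two nonzero terms" with extra cases, for example Lawrence-type or small-volume exceptions. The assumption $m>2k$ is presumably what excludes those: when $m\le2k$ other constructions, such as direct sums of small groups, can occur. So I would check each exceptional family in the classification against $m>2k$, using $n\ge 2k$ from \eqref{eq:min-support} and the fact that $S$ covers all coordinates. If the citation does not supply injectivity in the cyclic case, I would prove it by the full-support argument in the first item above.
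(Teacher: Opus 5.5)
Your proposal is correct, and its skeleton matches the paper's proof. For $n=2k$ you use the same full-support argument: each coordinate projection has trivial kernel, so $H$ embeds in $\T$ and is cyclic. For $n>2k$ both you and the paper apply \cite[Theorem~3.11]{BHCode} to the simplex obtained by restricting to $S$.

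The difference is in how you obtain the relations \eqref{eq:code-relations}. You take from the classification the full replicated-simplex-code structure, with each projective class of functionals occurring exactly $\lambda$ times. This is Bonisoli's theorem, which the paper attributes to \cite[Theorem~4.7]{BHCode}, not to Theorem~3.11. From it you read off $n$ and $|\supp(h)|=\lambda p^{r-1}$.

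The paper uses only $H\cong(\Z/p\Z)^r$. Each coordinate projection is then a nonzero $\F_p$-linear functional with kernel of order $m/p$. Double counting the pairs $(h,i)$ with $h_i\ne0$ gives $2k(m-1)=n(m-m/p)$, that is, $p^{r-1}n=2k(1+p+\cdots+p^{r-1})$. Coprimality of $1+p+\cdots+p^{r-1}$ with $p$ then yields $r\ge2$, $p^{r-1}\mid 2k$, and both relations. So the paper's route needs only the isomorphism type from the citation and is otherwise elementary. Yours is equally valid once the simplex-code structure is cited correctly, but it imports more than the statement needs.

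Two smaller points:
\begin{enumerate}
\item The evenness of $\lambda$ for odd $p$ needs no height-balancing argument and no appeal to \cite{BHCode}. You already have $\lambda p^{r-1}=2k$, and $p^{r-1}$ is odd, so $\lambda$ is even.
\item The hypothesis $m>2k$ does not exclude exceptional families in Theorem~3.11. That theorem needs a non-pyramid, which restricting to $S$ provides, and $1<k<n/2$, which follows from $k\ge2$ and $n>2k$. In this proposition $m>2k$ only ensures $H\ne\{0\}$, so that $n\ge2k$. It is used afterwards to get $\lambda<p$.
\end{enumerate}
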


\begin{proof}
Every nonzero element of $H$ has support of size $2k$, so $n\ge2k$.
If $n=2k$, no nonzero element vanishes at any coordinate in $S$.
Thus each coordinate projection is injective.
Since a finite subgroup of $\T$ is cyclic, $H$ is cyclic.

Suppose $n>2k$.
The simplex associated with $H$ has dimension $n-1$, is not a lattice pyramid, and has binomial $h^*$-polynomial of degree $1<k<n/2$.
By \cite[Theorem~3.11]{BHCode}, one has $H\cong(\Z/p\Z)^r$ for some prime $p$ and some $r\ge1$.
Regard $H$ as an $\F_p$-vector space.
For each $i\in S$, let $\pi_i:H\to\T$ be the coordinate projection $h\mapsto h_i$.
Since $ph=0$ for every $h\in H$, we have
\[
 p\pi_i(h)=\pi_i(ph)=0\qquad(h\in H).
\]
Hence its image lies in $\T[p]\cong\F_p$.
Under this identification, $\pi_i$ is linear because it is additive and multiplication by a scalar in $\F_p$ is repeated addition.
By the definition of $S$, the linear map $\pi_i$ is nonzero.
Since its codomain $\F_p$ is one-dimensional, $\pi_i$ is surjective.
Hence the first isomorphism theorem gives
\[
 H/\ker\pi_i\cong\F_p.
\]
Consequently,
\[
 |\ker\pi_i|=\frac{|H|}{p}=\frac mp.
\]
Thus, for each $i\in S$, exactly $m/p$ elements of $H$ have $i$th coordinate zero, and $m-m/p$ have $i$th coordinate nonzero.
Counting the pairs $(h,i)$ with $h\in H$, $i\in S$, and $h_i\ne0$ in two ways, we obtain
\[
 \begin{aligned}
 2k(m-1)
 &=\sum_{h\in H}|\supp(h)|\\
 &=\sum_{i\in S}|\{h\in H:h_i\ne0\}|\\
 &=n\left(m-\frac mp\right).
 \end{aligned}
\]
Here the first equality uses $|\supp(h)|=2k$ for every nonzero $h\in H$.
Since $m=p^r$, this becomes
\[
 p^{r-1}n=2k(1+p+\cdots+p^{r-1}).
\]
The case $r=1$ would give $n=2k$, so $r\ge2$.
The factor $1+p+\cdots+p^{r-1}$ is relatively prime to $p$. Consequently, $p^{r-1}\mid2k$.
Setting $\lambda=2k/p^{r-1}$ gives \eqref{eq:code-relations}.
Finally, when $p$ is odd, $p^{r-1}$ is odd, so $\lambda p^{r-1}=2k$ implies that $\lambda$ is even.
\end{proof}

We call cases (i) and (ii) of Proposition~\ref{prop:BH} the \emph{cyclic case} and the \emph{simplex-code case}, respectively.
The latter terminology comes from the fact that the associated constant-weight linear code is equivalent, up to coordinate permutations and multiplication of coordinates by nonzero scalars, to a repeated simplex code (see \cite[Theorem~4.7]{BHCode}).
In the simplex-code case, $m>2k$ gives
\begin{equation}\label{eq:lambda-bound}
 \lambda<p.
\end{equation}

\section{The general volume bound}\label{sec:general-bound}

Let $\mathcal{P}\subset\R^d$ be a $d$-dimensional lattice polytope with
\[
 h^*_{\mathcal{P}}(t)=1+at^k+bt^\ell,
 \qquad a,b\ge1,\quad 1\le k<\ell.
\]
This section proves Theorem~\ref{thm:general}.
The argument uses the following inequality of Stanley
\cite{StanleyHilbert}.
For a combinatorial proof and related refinements, see \cite[Theorem~2.19 and Remark~1.3]{Stapledon}.

\begin{proposition}[Stanley]\label{prop:stanley}
Let $\mathcal{P}$ be a lattice polytope with $h^*_{\mathcal{P}}(t)=\sum_{j=0}^s h^*_j t^j$ and $h^*_s\ne0$.
Then, for every $0\le i\le s$, one has
\[
 h^*_0+h^*_1+\cdots+h^*_i
 \le h^*_s+h^*_{s-1}+\cdots+h^*_{s-i}.
\]
\end{proposition}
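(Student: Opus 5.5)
This is Stanley's theorem, and I would prove it commutative-algebraically through the Ehrhart ring. Let $\mathcal{P}\subset\R^d$ and let $A=\bigoplus_{j\ge0}A_j$ be the Ehrhart ring. Here $A_j$ is spanned over a field $\mathbb{K}$ by the monomials $x^{(v,j)}$ with $v\in j\mathcal{P}\cap\Z^d$.

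\textbf{Setup.} I would use the following standard facts.
\begin{enumerate}
\item $A$ is a normal affine semigroup ring of Krull dimension $d+1$, hence a Cohen--Macaulay domain by Hochster's theorem.
\item Its canonical module $\omega_A$ is the ideal spanned by the monomials $x^{(v,j)}$ with $v$ in the interior of $j\mathcal{P}$. This is the Danilov--Stanley description.
\item The Hilbert series are
\[
 H(A;t)=\frac{h^*(t)}{(1-t)^{d+1}},
 \qquad
 H(\omega_A;t)=(-1)^{d+1}H(A;1/t)=\frac{t^{d+1}h^*(1/t)}{(1-t)^{d+1}}.
\]
The second equality is Stanley reciprocity, or Ehrhart--Macdonald reciprocity.
\item Since $h^*_s\ne0$ and $h^*_j=0$ for $j>s$, the numerator of $H(\omega_A;t)$ is $\sum_{j=0}^s h^*_{s-j}t^{d+1-s+j}$. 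So the lowest nonzero degree of $\omega_A$ is $d+1-s$, and that graded piece has dimension $h^*_s$.
\end{enumerate}

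\textbf{Key step.} Choose a nonzero monomial $y\in\omega_A$ of degree $d+1-s$. Because $A$ is a domain and $\omega_A\subseteq A$ is an ideal, multiplication by $y$ gives an injective graded map $A(-(d+1-s))\to\omega_A$. Let $M$ be its cokernel. Then
\[
 H(M;t)=\frac{t^{d+1-s}\bigl(h^*_s+h^*_{s-1}t+\cdots+h^*_0t^s-(h^*_0+h^*_1t+\cdots+h^*_st^s)\bigr)}{(1-t)^{d+1}}.
\]
Write $g(t)=\sum_{i}(h^*_{s-i}-h^*_i)t^i$. Then $g(1)=0$, and the coefficient of $t^i$ in $g(t)/(1-t)$ is
\[
 (h^*_s+\cdots+h^*_{s-i})-(h^*_0+\cdots+h^*_i).
\]
So the inequality is equivalent to $g(t)/(1-t)$ having nonnegative coefficients. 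It therefore suffices to show that $M$ is either zero or a Cohen--Macaulay module of dimension $d$.

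\textbf{Main obstacle: $M$ is Cohen--Macaulay of dimension $d$.}
\begin{enumerate}
\item \emph{Dimension.} $\omega_A$ has rank one over the domain $A$, and $yA$ has rank one as well. Hence $M$ is torsion, so $M\otimes\operatorname{Frac}(A)=0$, and $\dim M\le d$.
\item \emph{Depth.} Both $A$ and $\omega_A$ are maximal Cohen--Macaulay of depth $d+1$. The depth lemma applied to $0\to A(-(d+1-s))\to\omega_A\to M\to0$ gives $\operatorname{depth}M\ge d$.
\item \emph{Conclusion.} Together these show $M$ is either $0$ or Cohen--Macaulay of dimension exactly $d$.
\item \emph{Nonnegativity.} Because $A$ is integral over the subalgebra generated by the degree-one vertex monomials, I can pick a degree-one linear system of parameters for $A$, after enlarging $\mathbb{K}$ to be infinite if necessary. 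Suitable $d$ of these elements form a regular sequence on $M$. Then $(1-t)^dH(M;t)$ is the Hilbert series of a finite-length quotient, so its coefficients are nonnegative.
\end{enumerate}
Comparing with the displayed formula gives $(1-t)^dH(M;t)=t^{d+1-s}g(t)/(1-t)$. Hence $g(t)/(1-t)$ has nonnegative coefficients, which is exactly the claimed family of inequalities for all $0\le i\le s$.

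I expect the delicate point to be the dimension and depth bookkeeping for $M$, in particular ensuring that the l.s.o.p. can be taken in degree one for $M$. If that becomes awkward, I would fall back on Stapledon's combinatorial half-open decomposition argument cited in the paper.
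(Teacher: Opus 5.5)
The paper gives no proof of its own here; it cites Stanley's Hilbert-function paper and points to Stapledon for a combinatorial alternative. Your argument (injecting $A(-(d+1-s))$ into the interior ideal $\omega_A$ via a lowest-degree interior monomial, then showing the cokernel is zero or Cohen--Macaulay of dimension $d$) is essentially Stanley's original proof and is correct, except that in the last step you need $d$ generic $\mathbb{K}$-linear combinations of the degree-one parameters, not $d$ of the parameters themselves, which need not be a system of parameters for $M$; since $M$ is Cohen--Macaulay of dimension $d$, these combinations form an $M$-regular sequence.
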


\begin{corollary}\label{cor:large-gap}
If $h^*_{\mathcal{P}}(t)=1+at^k+bt^\ell$ with $b\ge1$ and $\ell>2k$, then $a+1\le b$ and $\vol(\mathcal{P})\le2b$.
\end{corollary}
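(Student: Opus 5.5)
We apply Stanley's inequality (Proposition~\ref{prop:stanley}) to the $h^*$-polynomial $1+at^k+bt^\ell$. Here the degree is $s=\ell$ and $h^*_\ell=b\ne0$. We take the index $i=k$, which lies in $[0,\ell]$ because $k<\ell$.

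The left-hand side of Stanley's inequality is then
\[
 h^*_0+h^*_1+\cdots+h^*_k=1+a.
\]
The only nonzero coefficients are in positions $0$, $k$, and $\ell$, and all of $1,\dots,k-1$ vanish.

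The right-hand side is
\[
 h^*_\ell+h^*_{\ell-1}+\cdots+h^*_{\ell-k}.
\]
The indices here run over the interval $[\ell-k,\ell]$. The hypothesis $\ell>2k$ gives $\ell-k>k$, so this interval contains neither $0$ nor $k$. The only nonzero coefficient it contains is $h^*_\ell=b$. Stanley's inequality therefore reads $1+a\le b$.

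Finally, we use $\vol(\mathcal{P})=h^*_{\mathcal{P}}(1)=a+b+1$. The bound $a+1\le b$ then gives $\vol(\mathcal{P})\le 2b$.

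The only point needing care is the index bookkeeping: we must check that both $0$ and $k$ fall strictly outside the window $[\ell-k,\ell]$. The hypothesis $\ell>2k$ is used precisely to exclude $k$, and $\ell-k\ge1$ excludes $0$. No other difficulty is expected.
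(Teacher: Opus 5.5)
Your proposal is correct and is essentially the paper's proof: apply Stanley's inequality with $s=\ell$ and $i=k$, note that $\ell-k>k$ makes the two sides $1+a$ and $b$, and conclude $\vol(\mathcal{P})=a+b+1\le 2b$. Your explicit index bookkeeping just spells out what the paper states in one line.
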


\begin{proof}
Apply Proposition~\ref{prop:stanley} with $s=\ell$ and $i=k$.
Since $\ell-k>k$, the two sides are $1+a$ and $b$, respectively.
\end{proof}

Suppose $k<\ell\le2k$. If $k=1$, then $\ell=2$.
Treutlein's theorem \cite{Treutlein} gives either $(a,b)=(7,1)$ or $a\le3b+3$.
The exceptional pair gives
\[
 \vol(\mathcal{P})=9=(2b+1)(k+\ell).
\]
In all other cases, we obtain
\[
 \vol(\mathcal{P})\le4b+4\le3(2b+1)=(2b+1)(k+\ell).
\]

For the rest of this section, assume $2\le k<\ell\le2k$.
Since $h^*_1=0$, $\mathcal{P}$ is an empty lattice simplex.
Write $\Delta=\mathcal{P}$ and retain the notation of Section~\ref{sec:group}, including the subgroup $H$ fixed there.
If $m\le2k$, then \eqref{eq:index-range} gives $|G|\le2k(2b+1)<(2b+1)(k+\ell)$, which proves the required bound.
Hence we assume $m>2k$, so the two cases of Proposition~\ref{prop:BH} apply.

Put $Q=G/H$, so $|Q|=q$.
The identity of $Q$ is the coset $H$. We therefore write $C\ne H$ for a nonzero coset.
For a coset $C\in Q$, choose a representative $x\in G$ with $C=x+H$.
Define $-C=(-x)+H$ and
\[
 L(C)=|C\cap B|+|(-C)\cap B|\qquad(C\ne H).
\]
This is a sum of two cardinalities, not the cardinality of a union.
In particular, if $C=-C$, each element of $C\cap B$ is counted twice.
Since $B\cap H=\emptyset$ and the map $C\mapsto -C$ is a bijection on $Q\setminus\{H\}$,
\begin{equation}\label{eq:L-sum}
 \sum_{C\in Q\setminus\{H\}}L(C)=2b.
\end{equation}
For any $C\in Q$ and a representative $x\in G$ with $C=x+H$, put
\[
 \nu(C)=|\{i\notin S:x_i\ne0\}|.
\]
This is independent of the representative, since every element of $H$ vanishes outside $S$.
We refer to coordinates in $S$ and outside $S$ as internal and external coordinates, respectively.
Let
\[
 \rho:G\longrightarrow\T^{\{0,\ldots,d\}\setminus S},
 \qquad x\longmapsto(x_i)_{i\notin S}
\]
be the projection onto the external coordinates.
Since every element of $H$ vanishes outside $S$, this induces a homomorphism
\[
 \overline\rho:Q=G/H\longrightarrow\T^{\{0,\ldots,d\}\setminus S},
 \qquad x+H\longmapsto(x_i)_{i\notin S}.
\]

For every $C\in Q\setminus\{H\}$, we have
\begin{equation}\label{eq:master-support}
 \begin{aligned}
 \sum_{y\in C}|\supp(y)|
 &=\sum_{y\in C}\bigl(\htop(y)+\htop(-y)\bigr)\\
 &=2mk+(\ell-k)L(C).
 \end{aligned}
\end{equation}
Indeed, every element of $C$ has height $k$ or $\ell$, and exactly $|C\cap B|$ elements have height $\ell$.
Hence
\[
 \sum_{y\in C}\htop(y)
 =k\bigl(m-|C\cap B|\bigr)+\ell|C\cap B|
 =mk+(\ell-k)|C\cap B|.
\]
Since the map $y\mapsto -y$ is a bijection from $C$ to $-C$, applying the same formula to $-C$ and adding the two equalities proves \eqref{eq:master-support}, also when $C=-C$.

\begin{lemma}\label{lem:external-injective}
If $|S|<k+\ell$, then $\overline\rho$ is injective.
In particular, $\nu(C)\ge1$ for every nonzero coset $C\in Q$.
\end{lemma}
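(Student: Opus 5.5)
Since $\overline\rho$ is a homomorphism of groups, it suffices to show that its kernel is trivial. Equivalently, we show that no element of $G\setminus H$ vanishes at every external coordinate. The second assertion then follows: $\nu(C)$ is the number of nonzero external coordinates of any representative of $C$, so $\nu(C)=0$ exactly when $\overline\rho(C)=0$, which forces $C=H$.

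Suppose $x\in G\setminus H$ has $x_i=0$ for every $i\notin S$. Then $\supp(x)\subseteq S$, and by \eqref{eq:support-height} we get $|\supp(x)|=\htop(x)+\htop(-x)\le|S|<k+\ell$. Every nonzero element has height $k$ or $\ell$, so both $x$ and $-x$ must have height $k$. Otherwise the sum would be at least $k+\ell$. Hence $x\in X$.

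The same reasoning applies to every element of the subgroup $H'=H+\langle x\rangle$. Each element of $H'$ is supported inside $S$, because both $H$ and $x$ are. So every nonzero $y\in H'$ has $|\supp(y)|\le|S|<k+\ell$, which forces $\htop(y)=\htop(-y)=k$. Thus $H'\subseteq X$. Since $H'$ strictly contains $H$, this contradicts the inclusion-maximality of $H$ among subgroups contained in $X$, which was recorded when $H$ was fixed in Section~\ref{sec:group}. Therefore the kernel of $\overline\rho$ is trivial, and $\overline\rho$ is injective.

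There is no serious obstacle here. The one point to check is that the support bound applies to all of $H'$ and not only to $x$. This holds because support is contained in $S$ for every element of the group generated by $H$ and $x$.
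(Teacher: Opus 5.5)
Your proof is correct and is essentially the paper's argument. The paper applies the same support-size argument to the whole subgroup $K=\{x\in G:\supp(x)\subseteq S\}=\ker\rho$, showing $K\subseteq X$ and hence $K=H$ by inclusion-maximality, whereas you apply it to $H+\langle x\rangle$ for a single hypothetical kernel element; the two versions are interchangeable.
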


\begin{proof}
Set
\[
 K=\{x\in G:\supp(x)\subseteq S\}.
\]
This is a subgroup of $G$ containing $H$, since $H$ vanishes outside $S$.
If $x\in K\setminus X$, then either $x$ or $-x$ has height $\ell$, and therefore
\[
 |\supp(x)|\ge k+\ell>|S|,
\]
a contradiction. Thus $K\subseteq X$, and the inclusion-maximality of $H$ among subgroups contained in $X$ gives $K=H$.

By the definition of $\rho$, its kernel is $K$.
Consequently,
\[
 \ker\overline\rho=K/H=\{H\}.
\]
Hence $\overline\rho$ is injective.
\end{proof}

\begin{proposition}\label{prop:cyclic-count}
Assume that $H$ is as in Proposition~\ref{prop:BH}(i). For $C=x+H\in Q$ with $x\in G$, put
\[
 \tau(C)=|\{i\in S:mx_i\ne0\text{ in }\T\}|.
\]
Then, for every $C\ne H$, one has
\begin{equation}\label{eq:cyclic-L-general}
 (\ell-k)L(C)=m\nu(C)+\tau(C)-2k.
\end{equation}
\end{proposition}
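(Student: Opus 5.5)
The plan is to compute the left-hand side of \eqref{eq:master-support} directly, coordinate by coordinate, and compare it with the right-hand side $2mk+(\ell-k)L(C)$. Rearranged, the claimed identity \eqref{eq:cyclic-L-general} is equivalent to
\[
 \sum_{y\in C}|\supp(y)|=m\nu(C)+2k(m-1)+\tau(C).
\]
So it suffices to prove this support count for a fixed nonzero coset $C=x+H$. I will write every element of $C$ uniquely as $y=x+h$ with $h\in H$, and split $\sum_{y\in C}|\supp(y)|$ into contributions from external coordinates $i\notin S$ and internal coordinates $i\in S$.

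\emph{External coordinates.} Every element of $H$ vanishes outside $S$, so $y_i=x_i$ for all $y\in C$ and $i\notin S$. Hence each $y\in C$ has exactly $\nu(C)$ nonzero external coordinates. Since $|C|=m$, the external contribution is $m\nu(C)$.

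\emph{Internal coordinates.} This step uses the structure from Proposition~\ref{prop:BH}(i). Fix $i\in S$. The projection $h\mapsto h_i$ is an injective homomorphism from the group $H$ of order $m$ into $\T$. Its image is therefore the unique subgroup of order $m$, namely $\T[m]$. As $h$ runs over $H$, the value $y_i=x_i+h_i$ runs bijectively over the coset $x_i+\T[m]$ of $\T[m]$ in $\T$. This coset contains $0$ exactly when $x_i\in\T[m]$, that is, when $mx_i=0$. Consequently the number of $y\in C$ with $y_i\ne0$ is $m-1$ if $mx_i=0$, and $m$ if $mx_i\ne0$. Summing over the $n=2k$ internal coordinates gives $2k(m-1)+\tau(C)$. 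This quantity is independent of the chosen representative, since changing $x$ by an element of $H$ does not change whether $x_i\in\T[m]$.

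Adding the external and internal contributions gives the displayed support count. Substituting it into \eqref{eq:master-support} yields $2mk+(\ell-k)L(C)=m\nu(C)+2mk-2k+\tau(C)$, which is exactly \eqref{eq:cyclic-L-general}. The only step with real content is the internal count. It rests on two facts: injectivity of the coordinate projections in the cyclic case, and uniqueness of the order-$m$ subgroup of $\T$. Once these are in hand, everything else is bookkeeping.
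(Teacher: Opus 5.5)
Your proof is correct and follows essentially the same route as the paper. You split $\sum_{y\in C}|\supp(y)|$ into the external contribution $m\nu(C)$ and the internal contribution $2k(m-1)+\tau(C)$, using that each injective projection $H\to\T$ has image $\T[m]$. You then compare with \eqref{eq:master-support}, and you also check, as the paper does, that $\tau(C)$ does not depend on the representative.
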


\begin{proof}
Since $mh=0$ for every $h\in H$, replacing a representative $x\in G$ by $x+h$ leaves every $mx_i$ unchanged.
Thus $\tau(C)$ is independent of the representative.
Fix $C\in Q\setminus\{H\}$ and choose $x\in G$ with $C=x+H$.
For $i\in S$, the coordinate map $H\to\T$ is injective and therefore has image $\T[m]$.
Thus the equation $x_i+h_i=0$ has exactly one solution $h\in H$ when $mx_i=0$, and no solution otherwise.
The total contribution of the internal coordinates to the support sum is
\[
 (2k-\tau(C))(m-1)+\tau(C)m=2k(m-1)+\tau(C).
\]
The external coordinates contribute $m\nu(C)$.
Comparing with \eqref{eq:master-support} proves \eqref{eq:cyclic-L-general}.

\end{proof}

\begin{proposition}\label{prop:code-count}
Assume that $H$ is as in Proposition~\ref{prop:BH}(ii). For $C=x+H\in Q$ with $x\in G$, put
\[
 \tau(C)=|\{i\in S:px_i\ne0\text{ in }\T\}|.
\]
Then, for every $C\ne H$, one has
\begin{equation}\label{eq:code-L-general}
 (\ell-k)L(C)=m\nu(C)+\frac mp\tau(C)-2k.
\end{equation}
\end{proposition}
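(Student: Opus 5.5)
We mirror the proof of Proposition~\ref{prop:cyclic-count}: compute the support sum $\sum_{y\in C}|\supp(y)|$ coordinate by coordinate and compare it with \eqref{eq:master-support}. First, $\tau(C)$ does not depend on the representative. Indeed, $ph=0$ for every $h\in H\cong(\Z/p\Z)^r$, so replacing $x$ by $x+h$ leaves every $px_i$ unchanged. Likewise $\nu(C)$ is well defined, as already noted. Fix $C\ne H$ and a representative $x$, so that $C=\{x+h:h\in H\}$.

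\emph{External coordinates.} For $i\notin S$ every $h\in H$ has $h_i=0$. Hence each of the $\nu(C)$ external coordinates with $x_i\ne0$ is nonzero for all $m$ elements of $C$, and the other external coordinates vanish on all of $C$. This gives a contribution of $m\nu(C)$.

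\emph{Internal coordinates.} Fix $i\in S$. By Proposition~\ref{prop:BH}(ii), the projection $\pi_i:H\to\T$ is a nonzero $\F_p$-linear functional with image exactly $\T[p]$, and each fibre of $\pi_i$ over a point of $\T[p]$ has exactly $m/p$ elements. The coordinate $(x+h)_i=x_i+h_i$ vanishes exactly when $h_i=-x_i$. If $px_i\ne0$, then $-x_i\notin\T[p]$, so no $h$ works and all $m$ elements of $C$ are nonzero at $i$. If $px_i=0$, then $-x_i\in\T[p]$, exactly $m/p$ elements of $H$ work, and $m-m/p$ elements of $C$ are nonzero at $i$. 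Summing over the $n=|S|$ internal coordinates gives
\[
 \tau(C)\,m+(n-\tau(C))\Bigl(m-\frac mp\Bigr)
 =n\Bigl(m-\frac mp\Bigr)+\frac mp\tau(C).
\]
The double count in the proof of Proposition~\ref{prop:BH} gives $n(m-m/p)=2k(m-1)$. So the internal contribution equals $2k(m-1)+\frac mp\tau(C)$.

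\emph{Conclusion.} Adding the two contributions,
\[
 \sum_{y\in C}|\supp(y)|=2km-2k+m\nu(C)+\frac mp\tau(C).
\]
Equating this with $2mk+(\ell-k)L(C)$ from \eqref{eq:master-support} yields \eqref{eq:code-L-general}. The only point needing care is the fibre count $m/p$ for each internal coordinate. It follows from surjectivity of the nonzero functional $\pi_i$ onto $\F_p\cong\T[p]$, which was already established in the proof of Proposition~\ref{prop:BH}. After that, the argument is routine bookkeeping.
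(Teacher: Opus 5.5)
Your proposal is correct and follows the paper's own proof essentially step for step. Like the paper, you show that $\tau(C)$ is independent of the representative because $pH=0$, and you obtain the fibre count $m/p$ from the surjectivity of the nonzero functional $\pi_i$ onto $\T[p]$; you then use the identity $n(m-m/p)=2k(m-1)$ from the proof of Proposition~\ref{prop:BH} and compare the coordinatewise support count with \eqref{eq:master-support}.
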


\begin{proof}
Since $ph=0$ for every $h\in H$, the definition of $\tau(C)$ is independent of the representative.
Fix $C\in Q\setminus\{H\}$ and choose $x\in G$ with $C=x+H$.
For each $i\in S$, the coordinate projection $\pi_i:H\to\T[p]\cong\F_p$ is a nonzero linear functional.
It is therefore surjective, with kernel of order $m/p$.
If $px_i=0$, the solutions $h\in H$ of $h_i=-x_i$ form one coset of $\ker\pi_i$, so there are exactly $m/p$ solutions.
If $px_i\ne0$, there are no solutions, since $h_i\in\T[p]$ for all $h\in H$.
The internal coordinates therefore contribute
\[
 \begin{aligned}
 &(n-\tau(C))\left(m-\frac mp\right)+\tau(C)m\\
 &\qquad=n\left(m-\frac mp\right)+\frac mp\tau(C)
 =2k(m-1)+\frac mp\tau(C),
 \end{aligned}
\]
where the last equality uses $n(m-m/p)=2k(m-1)$ from the proof of Proposition~\ref{prop:BH}.
Adding the external contribution $m\nu(C)$ and using \eqref{eq:master-support} gives the formula.
\end{proof}

The two coset formulas yield the following uniform estimate.

\begin{lemma}\label{lem:uniform-coset}
For every nonzero coset $C\in Q$, one has
\begin{equation}\label{eq:uniform-coset}
 (\ell-k)L(C)\ge m-2k.
\end{equation}
In particular, $L(C)>0$.
\end{lemma}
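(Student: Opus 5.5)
The plan is to split according to Proposition~\ref{prop:BH} and, in each case, apply the matching coset formula, Proposition~\ref{prop:cyclic-count} or Proposition~\ref{prop:code-count}. Each formula writes $(\ell-k)L(C)$ as $m\nu(C)$, plus a nonnegative multiple of $\tau(C)$, minus $2k$. Since $\tau(C)\ge0$, it suffices to show that either $\nu(C)\ge1$, or $\nu(C)=0$ and the $\tau$-term alone is at least $m$. The final claim $L(C)>0$ then follows at once: we are assuming $m>2k$ and $\ell>k$, so the right-hand side of \eqref{eq:uniform-coset} is positive.

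First I would dispose of the case $\nu(C)\ge1$. Here both formulas give $(\ell-k)L(C)\ge m-2k$ immediately. So the real work is the case $\nu(C)=0$, that is, $C$ has a representative $x$ with $\supp(x)\subseteq S$. Then $C$ lies in the subgroup $K=\{x\in G:\supp(x)\subseteq S\}$ from the proof of Lemma~\ref{lem:external-injective}.

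Since $C\ne H$ and $H$ is inclusion-maximal among subgroups contained in $X$, the subgroup $K$ strictly contains $H$ and is not contained in $X$. Hence some element of $K$ has height $\ell$ or has inverse of height $\ell$, which gives $n=|S|\ge k+\ell$. In the cyclic case $n=2k<k+\ell$, so this is impossible. Therefore $\nu(C)\ge1$ holds automatically in the cyclic case, and the case is already done.

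In the simplex-code case with $\nu(C)=0$, I need $\tfrac mp\tau(C)\ge m$, that is, $\tau(C)\ge p$. This is the main obstacle. The approach is by contradiction: assume $\tau(C)<p$. Then the coordinates $i\in S$ with $px_i\ne0$ number fewer than $p$.

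First I would observe that $\tau(C)\ge1$. Otherwise $px=0$ on all coordinates. Then every $x_i$ lies in $\T[p]$, and $H+\langle x\rangle$ is an elementary abelian $p$-group supported on $S$. I would then use the support count: every element $y$ of $C$ satisfies $|\supp(y)|\ge 2k$. Averaging over the coset, the formula says $\sum_{y\in C}|\supp(y)|=2k(m-1)+\tfrac mp\tau(C)$.

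To get $\tau(C)\ge p$, I would consider $px$, which lies in $G$ and is supported on the $\tau(C)$ coordinates where $px_i\ne0$. If $px\ne0$, then \eqref{eq:min-support} gives $\tau(C)\ge|\supp(px)|\ge 2k$. Moreover $2k=\lambda p^{r-1}\ge p$, because $r\ge2$ and $\lambda\ge1$. That settles this subcase.

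If instead $px=0$, then $\tau(C)=0$, and the formula reads $(\ell-k)L(C)=-2k<0$, which is impossible. Hence $px\ne0$ whenever $\nu(C)=0$. This closes the argument, so the key step I would verify carefully is the use of $px\in G$ together with \eqref{eq:min-support}.
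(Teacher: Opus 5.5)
Your proposal is correct and follows the paper's proof essentially step for step. Your argument that $\nu(C)=0$ is impossible in the cyclic case is an inline proof of Lemma~\ref{lem:external-injective}. In the simplex-code case with $\nu(C)=0$ you argue as the paper does: $px\ne0$ (otherwise the formula gives $-2k$), so \eqref{eq:min-support} yields $\tau(C)=|\supp(px)|\ge2k=\lambda p^{r-1}\ge p$. The paragraph about $H+\langle x\rangle$ being elementary abelian and averaging supports over the coset is never used and can be deleted.
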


\begin{proof}
Fix $C\in Q\setminus\{H\}$.
In the cyclic case, $|S|=2k<k+\ell$, so Lemma~\ref{lem:external-injective} gives $\nu(C)\ge1$. The claimed inequality then follows from \eqref{eq:cyclic-L-general}.
In the simplex-code case, \eqref{eq:code-L-general} gives the same conclusion if $\nu(C)\ge1$.
It remains to consider the simplex-code case with $\nu(C)=0$.
Choose a representative $x\in G$ with $C=x+H$.
The element $px\in G$ is nonzero. Otherwise $\tau(C)=0$, and the right-hand side of \eqref{eq:code-L-general} would equal $-2k$, a contradiction.
Since $x$ vanishes outside $S$, \eqref{eq:min-support} gives
\[
 |\supp(px)|=\tau(C)\ge2k.
\]
By Proposition~\ref{prop:BH}(ii), we have $r\ge2$.
Since $\lambda$ is a positive integer, it follows that
\[
 2k=\lambda p^{r-1}\ge p.
\]
Consequently,
\[
 (\ell-k)L(C)\ge\frac mp\,2k-2k\ge m-2k.
\]
The positivity of $L(C)$ follows from $m>2k$.
\end{proof}

\begin{proof}[Proof of Theorem~\ref{thm:general}]
Part (i), the case $k=1$ of part (ii), and the case $m\le2k$ were settled above.
It remains to prove part (ii) with $2\le k<\ell\le2k$ and $m>2k$.
By \eqref{eq:index-range}, we have $q\ge2$.
Summing \eqref{eq:uniform-coset} and using \eqref{eq:L-sum} gives
\[
 (q-1)(m-2k)\le2b(\ell-k).
\]
Therefore
\[
 m\le2k+\frac{2b(\ell-k)}{q-1}.
\]
Define
\[
 f(u)=2ku+2b(\ell-k)\frac{u}{u-1}\qquad(u\in\R,\ u>1).
\]
Then $|G|=mq\le f(q)$. The second derivative satisfies
\[
 f''(u)=\frac{4b(\ell-k)}{(u-1)^3}>0.
\]
By \eqref{eq:index-range}, $q\in[2,2b+1]$.
The function $f$ is convex on this interval and attains its maximum at an endpoint.
The endpoint values are
\[
 f(2)=4k+4b(\ell-k),\qquad
 f(2b+1)=(2b+1)(k+\ell).
\]
Their difference is
\[
 f(2b+1)-f(2)=(2b-1)(3k-\ell)>0,
\]
since $b\ge1$ and $\ell\le2k$.
Combining $|G|\le f(q)$ with the endpoint comparison, we obtain
\[
 |G|\le(2b+1)(k+\ell).\qedhere
\]
\end{proof}

\section{The case \texorpdfstring{$\ell=2k$}{ell=2k}}\label{sec:critical}

Let $\mathcal{P}\subset\R^d$ be a $d$-dimensional lattice polytope with
\[
 h^*_{\mathcal{P}}(t)=1+at^k+bt^{2k},
 \qquad a,b\ge1,\quad k\ge1.
\]
We prove Theorem~\ref{thm:critical} in this section.
The preparatory arguments below concern the case $k\ge2$ and $m>2k$.
Here $\mathcal{P}$ is an empty lattice simplex. Write $\Delta=\mathcal{P}$ and retain the notation of Sections~\ref{sec:group} and~\ref{sec:general-bound}, with $\ell=2k$.
The index satisfies \eqref{eq:index-range}, and Lemma~\ref{lem:uniform-coset} gives $L(C)>0$ for every nonzero coset.

Substituting $\ell=2k$ in \eqref{eq:cyclic-L-general} and \eqref{eq:code-L-general}, we obtain, for every nonzero coset $C\in Q$,
\begin{equation}\label{eq:critical-cyclic-L}
 L(C)=\frac{m\nu(C)+\tau(C)}k-2
\end{equation}
in the cyclic case, and
\begin{equation}\label{eq:critical-code-L}
 L(C)=2\left(\frac p\lambda\nu(C)+\frac{\tau(C)}\lambda-1\right)
\end{equation}
in the simplex-code case.

In the simplex-code case with $p$ odd, \eqref{eq:code-relations} gives
\[
 \frac{|S|}{k}=\frac{2(p^r-1)}{p^{r-1}(p-1)}
 <\frac{2p}{p-1}\le3.
\]
Thus $|S|<3k=k+\ell$, so Lemma~\ref{lem:external-injective} applies.
In particular, $\nu(C)\ge1$ for every nonzero coset $C\in Q$.

We next bound the number of elements with only one nonzero coordinate.
For a subgroup $A\le\T^N$, we say that $A$ is \emph{supported on a single coordinate} if there is an index $i_0\in\{1,\ldots,N\}$ such that $z_j=0$ for every $z\in A$ and every $j\ne i_0$.
The Hamming weight of $z\in\T^N$ is the number of its nonzero coordinates.

\begin{lemma}\label{lem:weight-one}
Let $A$ be a finite subgroup of $\T^N$, with $|A|=q$.
If $A$ is not supported on a single coordinate, then at most $q/2$ of its nonzero elements have Hamming weight one.
\end{lemma}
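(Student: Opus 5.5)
Let $W\subseteq A$ be the set of nonzero elements of Hamming weight one, and for each coordinate $i$ put
\[
 A_i=\{z\in A:z_j=0\text{ for all }j\ne i\}.
\]
Each $A_i$ is a subgroup of $A$, and $W$ is the disjoint union of the sets $A_i\setminus\{0\}$ over those $i$ with $A_i\ne\{0\}$. The plan is to bound $\sum_i(|A_i|-1)$ by $q/2$ using the subgroup $E=\sum_iA_i\le A$. For distinct indices the groups $A_i$ have disjoint supports, so the sum is internal direct and $|E|=\prod_i|A_i|$. Write $e_i=|A_i|\ge1$. We then need
\[
 \sum_i(e_i-1)\le\frac q2,
\]
and we split into two cases.

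\emph{Case 1: $E\ne A$.} Then $|E|$ is a proper divisor of $q$, so $\prod_ie_i\le q/2$. We use the elementary inequality
\[
 \sum_i(e_i-1)\le\prod_ie_i-1,
\]
valid for integers $e_i\ge1$. It follows by induction from $(x-1)+(y-1)\le xy-1$, which is equivalent to $(x-1)(y-1)\ge0$. Hence $|W|\le q/2-1<q/2$.

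\emph{Case 2: $E=A$.} Since $A$ is not supported on a single coordinate, at least two of the $e_i$ exceed $1$. Put $x=e_1\ge2$ and $y=\prod_{i\ge2}e_i\ge2$ after reindexing. The sum over $i\ge2$ is at most $y-1$ by the same inequality. Thus
\[
 |W|\le(x-1)+(y-1)=x+y-2.
\]
The claim $x+y-2\le xy/2$ is equivalent to $(x-2)(y-2)\ge0$, which holds. So $|W|\le q/2$, with equality possible, for instance, in $(\Z/2\Z)^2$ embedded diagonally-free as $\T[2]\times\T[2]$.

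The only delicate step is the case split. When $E\ne A$ we lose a factor of at least $2$ through the index $[A:E]\ge2$. When $E=A$ the hypothesis of not being supported on a single coordinate forces a product of at least two nontrivial factors. I expect no real obstacle beyond checking that the direct-sum decomposition of $E$ is valid, which follows from the disjoint supports of the $A_i$.
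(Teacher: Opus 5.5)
Your proof is correct and is essentially the paper's argument: the same coordinate subgroups $A_i$, the same direct-sum bound $\prod_i|A_i|\le q$, and the same elementary inequality (the paper's base case is exactly $(q_1-2)(q_2-2)\ge0$). The only difference is how the cases are split. You split on whether $\bigoplus_i A_i=A$, and take the factor $1/2$ from the index when the sum is proper. The paper splits on the number of nontrivial $A_i$, and takes the factor from the properness of the single $A_{i_0}$ when there is one, and from $\sum(q_i-1)\le\frac12\prod q_i$ when there are at least two.
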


\begin{proof}
For each $i\in\{1,\ldots,N\}$, let
\[
 A_i=\{z\in A:z_j=0\text{ for every }j\ne i\}.
\]
The weight-one elements form the disjoint union of the sets $A_i\setminus\{0\}$.
If all $A_i$ are trivial, there is nothing to prove.
If precisely one is nontrivial, it is proper in $A$ by the hypothesis, and its order is at most $q/2$.

Suppose there are $s\ge2$ nontrivial subgroups among the $A_i$.
After relabelling the coordinates, denote them by $A_1,\ldots,A_s$ and put $q_i=|A_i|\ge2$ for $1\le i\le s$.
These subgroups are supported on distinct coordinates, so their sum is a direct sum contained in $A$, and $\prod_{i=1}^s q_i\le q$.
We claim that
\[
 \sum_{i=1}^s(q_i-1)\le\frac12\prod_{i=1}^s q_i.
\]
This inequality follows by induction on $s$.
Indeed, for $s=2$, it is equivalent to $(q_1-2)(q_2-2)\ge0$.
Suppose $s\ge3$ and set $Q_0=\prod_{i=1}^{s-1}q_i$.
By the induction hypothesis and $Q_0\ge4$, we obtain
\[
 \begin{aligned}
 \sum_{i=1}^s(q_i-1)
 &\le\frac{Q_0}{2}+(q_s-1)\\
 &\le\frac{Q_0}{2}+\frac{Q_0}{2}(q_s-1)
 =\frac12\prod_{i=1}^s q_i.
 \end{aligned}
\]
It follows that the number of weight-one elements is at most $\frac12\prod_{i=1}^s q_i\le q/2$.
\end{proof}

The size of the external support gives stronger bounds in two of the cases of Proposition~\ref{prop:BH}.

\begin{proposition}\label{prop:structural-linear}
Assume $\ell=2k$, $k\ge2$, and $m>2k$.
\begin{enumerate}
\item\label{item:cyclic-linear}
In the cyclic case, one has
\[
 \vol(\mathcal{P})\le4(b+1).
\]
\item\label{item:large-prime-linear}
In the simplex-code case with $p\ge5$, one has
\[
 \vol(\mathcal{P})\le8b-2.
\]
\end{enumerate}
\end{proposition}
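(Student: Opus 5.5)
The plan is to bound the number $\nu(C)$ of external nonzero coordinates from below using the height-$2k$ elements, and then feed this into the coset formulas \eqref{eq:critical-cyclic-L} and \eqref{eq:critical-code-L} together with \eqref{eq:L-sum}.

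The key observation is the following. By Lemma~\ref{lem:uniform-coset}, $L(C)>0$ for every nonzero coset $C$. Hence $C$ or $-C$ contains some $y\in B$. Since $\htop(y)=2k$ and $\htop(-y)\ge k$, \eqref{eq:support-height} gives $|\supp(y)|\ge3k$. At most $|S|$ of these coordinates are internal, so
\[
 \nu(C)=\nu(-C)\ge 3k-|S|
\]
for every $C\ne H$.

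\textbf{Cyclic case.} Here $|S|=2k$, so $\nu(C)\ge k$ for every nonzero coset. Substituting into \eqref{eq:critical-cyclic-L} with $\tau(C)\ge0$ gives $L(C)\ge m-2$. Summing over the $q-1$ nonzero cosets and using \eqref{eq:L-sum} yields $(q-1)(m-2)\le 2b$. Therefore
\[
 |G|=mq\le 2q+\frac{2bq}{q-1}=2q+2b+\frac{2b}{q-1}.
\]
This function is convex in $q$, so I bound it at the endpoints of the admissible range. At $q=2$ it equals $4b+4$. For the upper endpoint, I will not use \eqref{eq:index-range} directly. Instead, $m>2k\ge4$ forces $m\ge5$, and then $(q-1)(m-2)\le2b$ gives $q-1\le 2b/3$. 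At $q-1=2b/3$ the value is $10b/3+5$, which is at most $4b+4$ once $b\ge2$. The case $b=1$ is checked by hand: there $q=2$ is forced, since $q-1\le 2/3$ is impossible for $q\ge3$.

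\textbf{Simplex-code case with $p\ge5$.} By \eqref{eq:code-relations},
\[
 \frac{|S|}{k}<\frac{2p}{p-1}\le\frac52,
\]
so $\nu(C)>k/2$. Since $m/(2k)=p/\lambda$, we have $p\nu(C)/\lambda>m/4$. Substituting into \eqref{eq:critical-code-L} and dropping the $\tau$ term gives $L(C)>m/2-2$. Because $L(C)$ is an even integer, one may sharpen this slightly if needed. Summing via \eqref{eq:L-sum} gives
\[
 (q-1)\Bigl(\frac m2-2\Bigr)<2b,
\]
so
\[
 mq<4q+\frac{4bq}{q-1}=4q+4b+\frac{4b}{q-1}.
\]
At $q=2$ this is $8b+8$, which is too weak. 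So the hard part is to shave the constant down to $8b-2$. For this I would exploit more structure:
\begin{itemize}
 \item the exact value $|S|=\lambda(p^r-1)/(p-1)$ together with $\lambda<p$ from \eqref{eq:lambda-bound};
 \item $\lambda$ even, which gives $p/\lambda\ge 5/4$;
 \item the integrality of $L(C)$ and $\nu(C)$ (for instance $\nu(C)\ge\lceil 3k-|S|\rceil$);
 \item the contribution $\tau(C)/\lambda$.
\end{itemize}
These should improve the per-coset bound to roughly $L(C)\ge m/2-1$, and then handle $q=2$ and larger $q$ separately.

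I expect this final constant-chasing in the simplex-code case to be the main obstacle. My first attempt would be to use $\nu(C)\ge 3k-|S|$ exactly, rather than the crude bound $\nu(C)>k/2$.
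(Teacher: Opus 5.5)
Your cyclic case is correct and is the paper's argument up to the last step. The estimate $\nu(C)\ge 3k-|S|=k$ gives $L(C)\ge m-2$ and $(q-1)(m-2)\le 2b$. You then use convexity in $q$ together with $q-1\le 2b/3$, whereas the paper simply notes $4(b+1)-mq\ge 2(q-1)(m-2)+4-mq=(q-2)(m-4)\ge0$. (For $b=1$ the configuration is in fact impossible, since $(q-1)(m-2)\ge3$, but this does not affect your conclusion.)

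Part (ii), however, is not proved. Replacing $|S|/k$ by the uniform bound $5/2$ is too lossy: your bound $mq<4q+4b+4b/(q-1)$ gives only $8b+8$ at $q=2$. Your proposed target, ``roughly $L(C)\ge m/2-1$'', would also not suffice. Already for $q=2$ it gives only $2b\ge L(C)\ge m/2-1$, i.e.\ $mq=2m\le 8b+4$.

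What you need is $L(C)\ge (m+1)/2$ for every nonzero coset, and your ``first attempt'' produces exactly this once it is carried out. Use $2p/\lambda=m/k$ in \eqref{eq:critical-code-L}, drop the $\tau(C)$ term, and combine $\nu(C)\ge 3k-|S|$ with the exact value $|S|/k=2(p^r-1)/(p^{r-1}(p-1))$. This gives
\[
 L(C)\ge\frac mk\,(3k-|S|)-2=\frac{(p-3)m+2}{p-1}\ge\frac{m+1}{2},
\]
where the last inequality is equivalent to $(p-5)(m-1)\ge0$. Summing with \eqref{eq:L-sum} yields $(q-1)(m+1)\le 4b$. Then $q\ge2$ gives $mq\le 2m(q-1)=2(q-1)(m+1)-2(q-1)\le 8b-2$.

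Two of the tools on your list are also unreliable. First, $p/\lambda\ge 5/4$ does not follow from $\lambda$ even and $\lambda<p$: for instance $p=7$, $\lambda=6$ gives $7/6$. Second, evenness of $L(C)$ is not justified when $\lambda>1$, since \eqref{eq:critical-code-L} only says $L(C)=2(p\nu(C)+\tau(C))/\lambda-2$.
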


\begin{proof}
Fix a nonzero coset $C\in Q$.
Lemma~\ref{lem:uniform-coset} gives $L(C)>0$, so there is an element
$y\in B\cap(C\cup(-C))$.
Since $\htop(y)=2k$ and $\htop(-y)\ge k$, equation \eqref{eq:support-height} gives $|\supp(y)|\ge3k$.
All elements of a coset have the same external coordinates, and $C$ and $-C$ have the same external support.
Consequently,
\[
 \nu(C)=|\supp(y)\setminus S|\ge3k-|S|.
\]
This holds for every nonzero coset.

In the cyclic case, $|S|=2k$ by Proposition~\ref{prop:BH}(i), so $\nu(C)\ge k$.
Using $\tau(C)\ge0$ in \eqref{eq:critical-cyclic-L}, we obtain
\[
 L(C)\ge m-2\qquad(C\ne H).
\]
Summing over the $q-1$ nonzero cosets and using \eqref{eq:L-sum} gives
\[
 2b\ge(q-1)(m-2).
\]
Since $q\ge2$ by \eqref{eq:index-range} and $m>2k\ge4$, it follows that
\[
 \begin{aligned}
 4(b+1)-mq
 &\ge2(q-1)(m-2)+4-mq\\
 &=(q-2)(m-4)\ge0.
 \end{aligned}
\]
Hence $\vol(\mathcal{P})=mq\le4(b+1)$.

Now suppose that $H$ is in the simplex-code case with $p\ge5$.
By \eqref{eq:code-relations},
\[
 \frac{|S|}{k}=\frac{2(p^r-1)}{p^{r-1}(p-1)},
 \qquad m=p^r.
\]
The external-support estimate and \eqref{eq:critical-code-L} yield
\[
 \begin{aligned}
 L(C)
 &\ge\frac{m}{k}(3k-|S|)-2\\
 &=\frac{(p-3)m+2}{p-1}
 \ge\frac{m+1}{2}.
 \end{aligned}
\]
The last inequality is equivalent to $(p-5)(m-1)\ge0$.
Summing and applying \eqref{eq:L-sum}, we obtain
\[
 4b\ge(q-1)(m+1).
\]
Finally, $q\ge2$ gives
\begin{align*}
 \vol(\mathcal{P})=mq
 &\le2m(q-1)\\
 &=2(q-1)(m+1)-2(q-1)\\
 &\le8b-2(q-1)\le8b-2.\qedhere
\end{align*}
\end{proof}

\begin{proof}[Proof of Theorem~\ref{thm:critical}]
Assume $b\ge2$.
If $k=1$, then Treutlein's theorem gives $a\le3b+3$.
Therefore
\[
 \vol(\mathcal{P})=a+b+1\le4(b+1),
\]
with equality if and only if $a=3b+3$.
The rectangles given after Theorem~\ref{thm:critical} attain equality for every $b\ge2$.

Suppose $k\ge2$. As observed in Section~\ref{sec:general-bound}, $\mathcal{P}$ is an empty lattice simplex.
Use the notation introduced above, with $\ell=2k$.
If $m\le2k$, then \eqref{eq:index-range} gives
\[
 \vol(\mathcal{P})=mq\le2k(2b+1)<4k(b+1).
\]
We may therefore assume $m>2k$.
We show that $|G|<4k(b+1)$ in each case of Proposition~\ref{prop:BH}.
We use the following consequence of \eqref{eq:L-sum}. If $L(C)\ge2$ for all $C\in Q\setminus\{H\}$, then
\[
 2b=\sum_{C\in Q\setminus\{H\}}L(C)\ge2(q-1),
 \qquad\text{hence}\qquad q\le b+1.
\]

\smallskip
\noindent\emph{The cyclic case.}
Proposition~\ref{prop:structural-linear}(i) gives
\[
 |G|\le4(b+1)<4k(b+1),
\]
since $k\ge2$.

\smallskip
\noindent\emph{The simplex-code case with $p=2$.}
By \eqref{eq:lambda-bound}, the positive integer $\lambda$ satisfies $\lambda<2$, so $\lambda=1$ and $m=4k$.
For every nonzero coset $C$, equation \eqref{eq:critical-code-L} gives
\[
 L(C)=2\bigl(2\nu(C)+\tau(C)-1\bigr).
\]
Since $L(C)>0$ by Lemma~\ref{lem:uniform-coset}, it is a positive even integer and thus at least two.
It follows that $q\le b+1$ and
\[
 |G|=4kq\le4k(b+1).
\]
Suppose that equality holds.
Then $q=b+1$, and \eqref{eq:L-sum} forces $L(C)=2$ for every nonzero coset.
Consequently,
\[
 2\nu(C)+\tau(C)=2\qquad(C\ne H).
\]
Suppose a nonzero coset $C$ satisfied $(\nu(C),\tau(C))=(0,2)$. Choose $x\in G$ with $C=x+H$. Then $2x\in G$ would be nonzero with support of size two, contradicting \eqref{eq:min-support} since $2k\ge4$.
Thus $(\nu(C),\tau(C))=(1,0)$ for every nonzero coset.
The external projection is injective on $Q$, because no nonzero coset lies in its kernel.
Every nonzero element of its image has Hamming weight one, so the image is supported on a single coordinate.
Projection onto that coordinate embeds $Q$ in $\T$, so $Q$ is cyclic.
Choose a generator $C_0\in Q$ and a representative $x_0\in G$ with $C_0=x_0+H$.
Since $\tau(C_0)=0$, the element $2x_0\in G$ vanishes on $S$ and has at most one nonzero external coordinate.
Equation~\eqref{eq:min-support} gives $2x_0=0$.
Hence $2C_0=H$ and $q=2$, so $b=q-1=1$, contradicting $b\ge2$.

\smallskip
\noindent\emph{The simplex-code case with $p\ge5$.}
Proposition~\ref{prop:structural-linear}(ii) gives
\[
 |G|\le8b-2<4k(b+1),
\]
again because $k\ge2$.

\smallskip
\noindent\emph{The simplex-code case with $p=3$.}
By Proposition~\ref{prop:BH}(ii), $\lambda$ is a positive even integer, and \eqref{eq:lambda-bound} gives $\lambda<3$.
Thus $\lambda=2$ and $m=3k$.
As observed at the beginning of this section, $|S|<3k=k+\ell$, so Lemma~\ref{lem:external-injective} gives $\nu(C)\ge1$ for every nonzero coset.
Substituting $p=3$ and $\lambda=2$ into \eqref{eq:critical-code-L}, we obtain
\begin{equation}\label{eq:p3-L}
 L(C)=3\nu(C)+\tau(C)-2
\end{equation}
for every nonzero coset $C\in Q$.
Set $E=\{C\in Q\setminus\{H\}:L(C)=1\}$.
Since $\nu(C)$ and $\tau(C)$ are integers with $\nu(C)\ge1$ and $\tau(C)\ge0$, \eqref{eq:p3-L} gives
\[
 C\in E\quad\Longleftrightarrow\quad \nu(C)=1\text{ and }\tau(C)=0
 \qquad(C\ne H).
\]
We claim that
\begin{equation}\label{eq:E-bound}
 |E|\le q/2.
\end{equation}
The external projection $\overline\rho$ is injective by Lemma~\ref{lem:external-injective}, so $|\overline\rho(Q)|=q$.
Every $C\in E$ has $\nu(C)=1$, and thus $\overline\rho(C)$ has Hamming weight one.
If $\overline\rho(Q)$ is not supported on a single coordinate, Lemma~\ref{lem:weight-one} applied to this subgroup gives $|E|\le q/2$.

Suppose instead that $\overline\rho(Q)$ is supported on a single coordinate.
Let
\[
 T=\{x+H:x\in G,\ 3x_i=0\text{ for every }i\in S\}.
\]
This is the kernel of the homomorphism $Q\to\T^S$ given by $x+H\mapsto(3x_i)_{i\in S}$. The map is well-defined because $3H=0$.
Since $\overline\rho$ is injective, we have
$\overline\rho(C)\ne0$ for every nonzero coset $C\in Q$.
By our assumption that $\overline\rho(Q)$ is supported on a single coordinate,
it follows that $\nu(C)=1$ for every $C\ne H$.

On the other hand, the definition of $T$ gives
$C\in T$ if and only if $\tau(C)=0$.
Since $C\in E$ is equivalent to $\nu(C)=1$ and $\tau(C)=0$
for $C\ne H$, we conclude that $E=T\setminus\{H\}$.
If $T\ne Q$, then $T$ is a proper subgroup of $Q$,
so $|T|\le q/2$. Consequently,
\[
 |E|=|T|-1<q/2.
\]
If $T=Q$, projection onto the unique possible nonzero external coordinate embeds $Q$ in $\T$, so $Q$ is cyclic.
Choose a generator $C_0=x_0+H$ with $x_0\in G$.
As $C_0\in T$, the element $3x_0\in G$ vanishes on $S$ and has at most one nonzero external coordinate.
Equation~\eqref{eq:min-support} forces $3x_0=0$, so $3C_0=H$ and $q=3$.
Both nonzero cosets belong to $E$, and \eqref{eq:L-sum} gives $2b=2$, contrary to $b\ge2$.
This establishes \eqref{eq:E-bound}.

Every nonzero coset outside $E$ has $L(C)\ge2$.
Therefore
\[
 \begin{aligned}
 2b&\ge|E|+2(q-1-|E|)\\
 &=2q-2-|E|\ge\frac{3q}{2}-2.
 \end{aligned}
\]
Hence $3q\le4(b+1)$, and
\[
 |G|=3kq\le4k(b+1).
\]
Suppose that equality holds.
Then $3q=4(b+1)$, $|E|=q/2$, and $L(C)=2$ for every nonzero coset outside $E$.
Equation \eqref{eq:p3-L} and $\nu(C)\ge1$ show that these cosets satisfy $(\nu(C),\tau(C))=(1,1)$, whereas those in $E$ satisfy $(\nu(C),\tau(C))=(1,0)$.
Let $T_0$ be the kernel of the homomorphism $Q\to\T^S$ given by $x+H\mapsto(3x_i)_{i\in S}$.
Then $E=T_0\setminus\{H\}$, so $|T_0|=q/2+1$.
Since $3q=4(b+1)$ and $b\ge2$, we have $q\ge4$, giving
\[
 \frac q2<|T_0|=\frac q2+1<q.
\]
This is impossible because a proper subgroup of $Q$ has order at most $q/2$.

Thus $|G|<4k(b+1)$ whenever $k\ge2$.
Since the normalized volume is an integer, $\vol(\mathcal{P})\le4k(b+1)-1$.
Together with the case $k=1$, this proves the theorem.
\end{proof}

The next proposition gives a complementary estimate depending only on $b$.
Its proof uses the height distribution in $G$, without any restriction on the order of the subgroup $H$.

\begin{proposition}\label{prop:b-only}
Let $\mathcal{P}\subset\R^d$ be a $d$-dimensional lattice polytope with
\[
 h^*_{\mathcal{P}}(t)=1+at^k+bt^{2k},
 \qquad a\ge1,\quad b\ge2,\quad k\ge2.
\]
Then $\mathcal{P}$ is an empty lattice simplex $\Delta$.
The group $G=\Lambda_\Delta$ is generated by
$B=\{x\in G:\htop(x)=2k\}$. Moreover, the normalized volume of $\mathcal{P}$ satisfies
\[
 \vol(\mathcal{P})\le(8b^2)^b.
\]
\end{proposition}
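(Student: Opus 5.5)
The statement has three parts, and I would prove them in order: emptiness, generation by $B$, and the volume bound. Emptiness is immediate. Since $k\ge2$ we have $h^*_1=0$, so, as observed in Section~\ref{sec:group}, $\mathcal{P}$ is an empty lattice simplex $\Delta$. The finite-group description then applies with $\ell=2k$.

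\emph{Generation.} Let $K=\langle B\rangle\le G$ and suppose, for contradiction, that $K\ne G$.
\begin{enumerate}
\item If $x\notin K$, then neither $x$ nor $-x$ lies in $B$. Hence $\htop(x)=\htop(-x)=k$, and \eqref{eq:support-height} gives $|\supp(x)|=2k$. Fixing such an $x$, every element of the coset $x+K$ then has support exactly $2k$, so
\[
 \sum_{y\in x+K}|\supp(y)|=2k|K|.
\]
\item I compute the same sum over $K$ itself. Writing $|\supp(y)|=\htop(y)+\htop(-y)$ and using that $y\mapsto-y$ permutes $K$, I get $\sum_{y\in K}|\supp(y)|=2\sum_{y\in K}\htop(y)$. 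Since $B\subseteq K$ and every other nonzero element has height $k$, this equals
\[
 2\bigl(k(|K|-1-b)+2kb\bigr)=2k(|K|-1)+2kb.
\]
\item I compare the two sums coordinatewise. Fix a coordinate $i$ and let $\pi_i$ denote the $i$th coordinate projection. In $K$, exactly $|K|-|K|/|\pi_i(K)|$ elements have nonzero $i$th coordinate. In $x+K$, the number is either $|K|$ (if $-x_i\notin\pi_i(K)$) or the same number as in $K$. So the coset sum is at least the $K$-sum, which gives
\[
 2k|K|\ge2k(|K|-1)+2kb, \qquad\text{that is,}\qquad b\le1.
\]
\end{enumerate}
This contradicts $b\ge2$, so $G=\langle B\rangle$.

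\emph{Volume bound.} Since $G$ is generated by the $b$ elements of $B$, we have $|G|\le\prod_{y\in B}\operatorname{ord}(y)$. It therefore suffices to show $\operatorname{ord}(y)\le8b^2$ for each $y\in B$. I would pass to the cyclic subgroup $Z=\langle y\rangle$. Deleting the coordinates outside $\supp(y)$, it corresponds to a lattice simplex with $h^*$-polynomial $1+a't^k+b't^{2k}$, where $1\le b'\le b$. I then rerun the machinery of Sections~\ref{sec:group} and~\ref{sec:general-bound} inside $Z$:
\begin{enumerate}
\item Lemma~\ref{lem:cyclic-closed} and Lemma~\ref{lem:avoidance} give a subgroup $H\le Z$ with $H\subseteq X$ and index $q\le2b'+1$.
\item Since $Z$ is cyclic, the simplex-code case of Proposition~\ref{prop:BH} (which needs $(\Z/p\Z)^r$ with $r\ge2$) cannot occur. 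So if $|H|>2k$, Proposition~\ref{prop:structural-linear}(i) gives $|Z|\le4(b'+1)\le8b^2$.
\end{enumerate}

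The remaining case $|H|\le2k$ is the main obstacle, because \eqref{eq:index-range} only yields the $k$-dependent bound $2k(2b'+1)$. There I would use the coset identity \eqref{eq:master-support} together with the cyclic structure. Write $Z=\langle y\rangle$ of order $N$ and $H=\langle (N/m)y\rangle$. Every nonzero coset must carry some $L(C)\ge1$, so at most $2b'$ cosets are nonzero, and each height-$2k$ element has support at least $3k$. The coordinates of $y$ have denominators dividing $N$, and the coordinates on which $H$ is nonzero number exactly $2k$. I would try to show that the support $2k$ is forced to be divisible by the index in a way that ties $m$ to $q$. Concretely, the aim is to show that coordinates of $y$ whose denominator does not divide $N/m$ contribute support growing like $m$ in each nonzero coset. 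Feeding this into the analogue of \eqref{eq:critical-cyclic-L} should give $m\lesssim4b'$, and hence $N=mq\le4b'(2b'+1)\le8b^2$.
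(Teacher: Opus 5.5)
Your emptiness step and your proof that $G=\langle B\rangle$ are correct. The generation argument takes a different but equally short route from the paper. You compare support sums over a coset $x+K$ with $x\notin K$ and over $K$ itself, coordinate by coordinate. The paper instead compares average heights via $\frac1{|A|}\sum_{x\in A}\htop(x)=\frac12\sum_i(1-1/d_i(A))$ and $d_i(J)\le d_i(G)$.

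The volume bound, however, has a genuine gap. You correctly reduce to showing $\operatorname{ord}(y)\le8b^2$ for each $y\in B$, but you only treat the case $|H|>2k$, through Proposition~\ref{prop:structural-linear}(i). In the case $|H|\le2k$, which you call the main obstacle, you give no argument. You offer only the hope that a support count will force $m$ to be at most about $4b'$. On its own, \eqref{eq:index-range} gives just the $k$-dependent bound $2k(2b'+1)$, so $(8b^2)^b$ is not established.

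The missing idea is an elementary count of generators, which makes the subgroup $H$ and the case split on $|H|$ unnecessary. Let $N=\operatorname{ord}(y)$. If $\gcd(j,N)=1$, then $jy$ and $y$ are integer multiples of each other, so $\supp(jy)=\supp(y)$. By \eqref{eq:support-height} this set has size $\htop(y)+\htop(-y)\ge3k>2k$. Hence $jy$ and $-jy$ cannot both have height $k$, so every generator of $\langle y\rangle$ lies in $B\cup(-B)$; in your notation, $X$ contains no generator of $Z$. This gives $\phi(N)\le2b$.

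Next, $\phi(N)^2/N=\prod_j p_j^{e_j-2}(p_j-1)^2\ge\frac12$, because the only factor below $1$ is the one with $p_j=2$ and $e_j=1$, which equals $\frac12$. Therefore $N\le2\phi(N)^2\le8b^2$, and your product bound $|G|\le\prod_{y\in B}\operatorname{ord}(y)$ then finishes the proof.
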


\begin{proof}
Since $h^*_1=0$, the polytope is an empty simplex.
Put $G=\Lambda_\Delta$ and $J=\langle B\rangle$, where $|B|=b$.
We first prove that $J=G$.
For each subgroup $A\le G$ and each $i\in\{0,\ldots,d\}$, let
\[
 d_i(A)=|\{x_i:x\in A\}|.
\]
The image of the $i$th coordinate projection on $A$ is $\T[d_i(A)]$.
Each element of this image is taken exactly $|A|/d_i(A)$ times, since its inverse image is a coset of the kernel.
Using representatives in $[0,1)$, we obtain
\[
 \begin{aligned}
 \frac1{|A|}\sum_{x\in A}\htop(x)
 &=\sum_{i=0}^d\frac1{d_i(A)}
      \sum_{j=0}^{d_i(A)-1}\frac{j}{d_i(A)}\\
 &=\frac12\sum_{i=0}^d\left(1-\frac1{d_i(A)}\right).
 \end{aligned}
\]
Because $J\le G$, the coordinate images satisfy
$\{x_i:x\in J\}\subseteq\{x_i:x\in G\}$, so $d_i(J)\le d_i(G)$.
Consequently,
\[
 \frac1{|J|}\sum_{x\in J}\htop(x)
 \le \frac1{|G|}\sum_{x\in G}\htop(x).
\]
All $b$ elements of height $2k$ belong to $J$, and every other nonzero element has height $k$.
Thus the last inequality is
\[
 k\left(1+\frac{b-1}{|J|}\right)
 \le k\left(1+\frac{b-1}{|G|}\right).
\]
Since $b\ge2$, this implies $|J|\ge|G|$.
Together with $J\le G$, we conclude that $J=G$.

Next fix $x\in B$ and let $N$ be its order in $G$.
By \eqref{eq:support-height}, we have $|\supp(x)|\ge3k$.
If $1\le j<N$ and $\gcd(j,N)=1$, then
\[
 \supp(jx)=\supp(x).
\]
Indeed, an integer multiple cannot create a nonzero coordinate from a zero one, so $\supp(jx)\subseteq\supp(x)$.
Since $\gcd(j,N)=1$, choose an integer $v$ such that $vj\equiv1\pmod N$.
Then $v(jx)=x$, because $Nx=0$.
Thus $x$ is an integer multiple of $jx$, giving $\supp(x)\subseteq\supp(jx)$.
If neither $jx$ nor $-jx$ belonged to $B$, both would have height $k$, contradicting $|\supp(jx)|\ge3k$.
Hence every generator of $\langle x\rangle$ belongs to $B\cup\{-y:y\in B\}$.
Let $\phi$ denote Euler's totient function.
The preceding observation gives
\[
  \phi(N)\le2b.
\]
Writing $N=\prod_{j=1}^u p_j^{e_j}$ for the prime factorization of $N$, we obtain
\[
 \frac{\phi(N)^2}{N}
 =\prod_{j=1}^u p_j^{e_j-2}(p_j-1)^2\ge\frac12.
\]
Every factor is at least $1$, except possibly the factor with $p_j=2$ and $e_j=1$, which equals $1/2$.
It follows that $N\le2\phi(N)^2\le8b^2$.

Finally, enumerate $B=\{x_1,\ldots,x_b\}$ and let $N_j$ be the order of $x_j$.
Since $G$ is abelian and $G=\langle B\rangle$, the homomorphism
\[
 \prod_{j=1}^b\Z/N_j\Z\longrightarrow G,
 \qquad(c_1,\ldots,c_b)\longmapsto\sum_{j=1}^b c_jx_j
\]
is surjective.
Therefore
\[
 \vol(\mathcal{P})=|G|
 \le\prod_{j=1}^b N_j\le(8b^2)^b.\qedhere
\]
\end{proof}

The condition $b\ge2$ is used when comparing the average heights on $J$ and $G$.
For $b=1$, that comparison gives no restriction on $[G:J]$, and Proposition~\ref{prop:b-one} shows that the volume is not bounded independently of $k$.

\begin{remark}\label{rem:general-b-only}
The proof of Proposition~\ref{prop:b-only} also applies to a lattice polytope $\mathcal{P}$ with
\[
 h^*_{\mathcal{P}}(t)=1+at^k+bt^\ell,
 \qquad a,b\ge1,\quad 2\le k<\ell\le2k,
\]
provided that $(\ell-k)b>k$.
Indeed, $\mathcal{P}$ is an empty simplex $\Delta$.
Put $G=\Lambda_\Delta$, $B=\{x\in G:\htop(x)=\ell\}$, and $J=\langle B\rangle$.
The same comparison of average heights gives
\[
 k+\frac{(\ell-k)b-k}{|J|}
 \le k+\frac{(\ell-k)b-k}{|G|}.
\]
The assumed positivity of $(\ell-k)b-k$ implies $J=G$.
For $x\in B$, the bound $|\supp(x)|\ge k+\ell>2k$ gives the same estimate $\phi(\operatorname{ord}(x))\le2b$.
The rest of the proof is unchanged, and hence $\vol(\mathcal{P})\le(8b^2)^b$.
\end{remark}

For $k,b\ge2$, Theorem~\ref{thm:critical} and Proposition~\ref{prop:b-only} together give
\[
 \vol(\mathcal{P})\le
 \min\{4k(b+1)-1,\,(8b^2)^b\}.
\]
We conclude with an example that gives a lower bound for any upper bound depending only on $b$.

\begin{example}\label{ex:replicated-triangles}
Let $\Delta_b=\operatorname{conv}((0,0),(2b+2,0),(0,2))$.
The triangle $\Delta_b$ has $b$ interior lattice points and $2b+6$ boundary lattice points.
Thus the standard formulas for lattice polygons give
$h^*_{\Delta_b}(t)=1+(3b+3)t+bt^2$,
and hence $|\Lambda_{\Delta_b}|=4(b+1)$.
For $k\ge1$, consider the subgroup
\[
 G_b^{(k)}=\{(x,\ldots,x):x\in\Lambda_{\Delta_b}\}\subseteq\T^{3k},
\]
where each $x$ is repeated $k$ times.
This operation multiplies every height by $k$ while preserving the order of the group.
By the correspondence in \cite[Theorem~2.3 and Proposition~2.6]{BHCode},
there is a $(3k-1)$-dimensional lattice simplex $\Delta_{k,b}$ with
\[
 h^*_{\Delta_{k,b}}(t)=1+(3b+3)t^k+bt^{2k},
 \qquad \vol(\Delta_{k,b})=4(b+1).
\]
\end{example}

In particular, the value $a=3b+3$ from Scott's inequality is realized for every $k\ge1$.
For $b\ge2$, Example~\ref{ex:replicated-triangles} shows that an upper bound depending only on $b$ must be at least $4(b+1)$.
Its volume is a factor of $k$ below the upper bound $4k(b+1)$ in Theorem~\ref{thm:critical}.
Neither this construction nor Proposition~\ref{prop:b-only} determines the optimal bound for fixed $b$.
The preceding estimates leave the following question.

\begin{question}\label{que:uniform-linear}
Does there exist an absolute constant $c>0$ such that
\[
 \vol(\mathcal{P})\le c(b+1)
\]
for every lattice polytope $\mathcal{P}$ with
\[
 h^*_{\mathcal{P}}(t)=1+at^k+bt^{2k},
 \qquad a\ge1,\quad b\ge2,\quad k\ge2?
\]
\end{question}

Example~\ref{ex:replicated-triangles} forces $c\ge4$.
Unlike Proposition~\ref{prop:b-only}, the question asks for a single linear bound as both $k$ and $b$ vary.
With $H$ chosen as in Section~\ref{sec:group}, Proposition~\ref{prop:structural-linear} settles the cyclic case with $m>2k$ and the simplex-code case with $p\ge5$.
It remains to consider $m\le2k$ and the simplex-code cases with $p=2$ or $p=3$ when $m>2k$.
In these remaining simplex-code cases, Proposition~\ref{prop:BH}(ii) and \eqref{eq:lambda-bound} give $\lambda=1$ and $k=2^{r-2}$ for $p=2$, and $\lambda=2$ and $k=3^{r-1}$ for $p=3$.
Thus, if $k$ is neither a power of $2$ nor a power of $3$, only the case $m\le2k$ remains.

\subsection*{Declaration on the use of generative AI}
OpenAI's ChatGPT was used for exploratory discussions of proof
strategies, English-language editing, and \LaTeX{} assistance.
All mathematical arguments and references were verified by the author.

\end{document}